\newtheorem{theorem}{Theorem}
\newtheorem{definition}[theorem]{Definition}
\newtheorem{corollary}[theorem]{Corollary}
\newtheorem{remark}[theorem]{Remark}
\newtheorem{proposition}[theorem]{Proposition}
\newenvironment{proof}[1][Proof]{\noindent\textbf{#1.} }{\ \rule{0.5em}{0.5em}}
\newtheorem{examples}[theorem]{Examples}
\title{Isoclinic Lie Crossed Modules}
\author{	E. Ilgaz, A. Odaba\c{s} and E. \"{O}. Uslu	\vspace*{10pt}}
\begin{document}

\maketitle

\begin{abstract}
We introduce the notion of isoclinism among crossed modules of Lie algebras, which will be called "Lie crossed modules" hereafter, and investigate some basic properties. Additionally, we introduce the notion of class preserving actor of a Lie crossed module and the relation with isoclinism.
\end{abstract}


\textbf{Keywords:} Action, Actor, Crossed Modules, Isoclinism. \newline

\section{Introduction}

The notion of isoclinism, which is an equivalence relation weaker than
isomorphism, was introduced in \cite{PH}, and detailed in \cite{MH}. This
notion has since been further studied by a number of authors, \cite%
{MRJ,RM,HM,FP,ARS,JT}. A Lie algebra analogue of isoclinism was introduced
in \cite{KM}. The structure of Lie algebras occurring in an isoclinism
family has been studied in \cite{ARS} and also some applications given in 
\cite{BS, SAM, ARS}. Furthermore, in \cite{SM} the notion generalized to the
notion of $n$-isoclinism, that is the isoclinism with respect to the variety
of nilpotent Lie algebras of class at most $n$.

A Lie crossed module $L$ is a Lie algebra homomorphism $d:L_{1}%
\longrightarrow L_{0}$ with an action of $L_{0}$ on $L_{1}$ satisfying
certain conditions. We refer \cite{C1, C2, C3}, for a comprehensive and
detailed research about the notion. The notion is thought as the 2-dimensional Lie algebras
in categorical viewpoint, was introduced in \cite{KL}, in which they proved
that the third dimensional cohomology of Lie algebras classifies Lie crossed
modules. In this work, we introduce the notion of isoclinism among Lie
crossed modules which gives rise to a new classification weaker than
isomorphism , and the resulting equivalence classes called isoclinism
families. Also we introduce the class preserving actor of a Lie crossed module by inspiring from the notion of class preserving automorphisms of a group.   

In order to get our goals, we organize the paper as follows; In Section 2,
we recall some needed results about Lie crossed modules and introduce class preserving actor of a Lie crossed module which will be used in the sequel of the paper. In Section 3, we introduce the notion of isoclinism for Lie crossed modules and establish the
basic theory. As expected, we give the compatibility of this definition with
nilpotency, solvability and class preserving actors of Lie crossed modules.%
\newline

\section{Preliminaries}

In this section we recall some needed material about Lie crossed module. See 
\cite{C2,C3}, for a comprehensive research about the notion. Additionally,
we introduce the notion of \textquotedblleft class preserving actor of a Lie
crossed module\textquotedblright .

We fix a field $\mathbb{K}$ and assume all Lie algebras
to be over $\mathbb{K}\mathbf{.}$

\subsection{Isoclinism among Lie algebras}

\begin{definition}
	\cite{KM} Let $\mathfrak{g}$ and $\mathfrak{h}$ be Lie algebras. $\mathfrak{g%
	}$ and $\mathfrak{h}$ are said to be \textit{isoclinic} if there exist
	isomorphisms $\eta :\mathfrak{g}/Z(\mathfrak{g})\longrightarrow \mathfrak{h}%
	/Z(\mathfrak{h})$ and $\xi :[\mathfrak{g},\mathfrak{g}]\longrightarrow
	\lbrack \mathfrak{h},\mathfrak{h}]$ between central quotients and derived
	subalgebras, respectively, such that, the following diagram 
	\[
\xymatrix { {{\mathfrak{g}}/{Z(\mathfrak{g})}} \times {{\mathfrak{g}}/{Z(\mathfrak{g})}} \ar[rr]^-{c_{\mathfrak{g}}} \ar[d]_{{\eta}\times{\eta}}  & & [\mathfrak{g},\mathfrak{g}] \ar[d]^{\xi} \\ {\mathfrak{h}}/{Z(\mathfrak{h})}\times{\mathfrak{h}}/{Z(\mathfrak{h})} \ar[rr]_-{c_{\mathfrak{h}}}  & & [\mathfrak{h},\mathfrak{h}] }
	\]%
	is commutative where $c_{\mathfrak{g}},c_{\mathfrak{h}}$ are commutator maps
	of Lie algebras. The pair $(\eta ,\xi )$ is called an isoclinism from $%
	\mathfrak{g}$ to $\mathfrak{h},$ and denoted by $(\eta ,\xi ):\mathfrak{g}%
	\sim \mathfrak{h}.$
\end{definition}

\begin{examples}
	\ \newline
	(1) All abelian Lie algebras are isoclinic to each other. The pairs $(\eta
	,\xi )$ consist of trivial homomorphisms.\newline
	(2) Every Lie algebra is isoclinic to a stem Lie algebra ( a Lie algebra
	whose center is contained in its derived subalgebra).\newline
\end{examples}

\subsection{Lie crossed modules}

\begin{definition}
	\cite{KL} \textit{A Lie crossed module} is a Lie algebra homomorphism 
	\[
	d:L_{1}\longrightarrow L_{0} 
	\]%
	with a Lie action of $L_{0}$ on $L_{1}$ written $(l_{0},l_{1})\mapsto $ $%
	[l_{0},l_{1}],$ for $l_{0}\in L_{0},l_{1}\in L_{1}$ satisfying the following
	conditions: 
	\[
	\begin{array}{llll}
	1) & d([l_{0},l_{1}]) & = & [l_{0},d(l_{1})], \\ 
	2) & [d(l_{1}),l_{1}^{\prime }] & = & [l_{1},l_{1}^{\prime }],%
	\end{array}%
	\]%
	for all $l_{0}\in L_{0},$ $l_{1},l_{1}^{\prime }\in L_{1}.$
\end{definition}

In general such a crossed module is denoted by $L:L_{1}\overset{d}{
	\longrightarrow }L_{0}.$

We recall the following examples from \cite{C3}.

\begin{examples}
	\ \newline
	(1) $\mathfrak{g}\overset{ad}{\longrightarrow }Der(\mathfrak{g})$ is a
	crossed module for any Lie algebra $\mathfrak{g.}$\newline
	(2) If $\mathfrak{h}$ is an ideal of $\mathfrak{g,}$ then $\mathfrak{g}$
	acts on $\mathfrak{h}$ via adjoint representation and $\mathfrak{h}\overset{%
		inc.}{\hookrightarrow }\mathfrak{g}$ is a crossed module. In particular, we have the crossed
	modules $0\overset{inc}{\hookrightarrow }\mathfrak{g}${\ and }$\mathfrak{g}${%
		$\overset{id}{\longrightarrow }\mathfrak{g}.$\newline
		(3) If }$\mathfrak{h}$ is a $\mathfrak{g}$-module, then $\mathfrak{h}${$%
		\overset{0}{\longrightarrow }\mathfrak{g}$ is a crossed module.}
\end{examples}

\begin{definition}
	\cite{C1} A Lie crossed module $L:L_{1}\overset{d}{\longrightarrow }L_{0}$
	is called \textit{aspherical} if $\ker \left( d\right) =0,$ and is called 
	\textit{simply connected} if $coker\left( d\right) =0$.
\end{definition}

\textit{A morphism} between Lie crossed modules $L:L_{1}\overset{d}{%
	\longrightarrow }L_{0}$ and $L^{\prime }:L_{1}^{\prime }\overset{d^{\prime }}%
{\longrightarrow }L_{0}^{\prime }$ is a pair $(\alpha ,\beta )$ of Lie
algebra homomorphisms $\alpha :L_{1}\longrightarrow L_{1}^{\prime },$ $\beta
:L_{0}\longrightarrow L_{0}^{\prime }$ such that $\beta d=d^{\prime }\alpha $
and $\alpha ([l_{0},l_{1}])=[\beta (l_{0}),\alpha (l_{1})],$ for all $%
l_{0}\in L_{0},$ $l_{1}\in L_{1}$. Consequently, we have the category $%
\mathbf{XLie}$ whose objects are Lie crossed modules.

A Lie crossed module $M:M_{1}\overset{d^{\prime }}{\longrightarrow }M_{0}$
is a \textit{subcrossed module} of a crossed module $L:L_{1}\overset{d}{%
	\longrightarrow }L_{0}$ if $M_{1},$ $M_{0}$ are Lie subalgebras of $L_{1}$, $%
L_{0},$ respectively, $d^{\prime }=d|_{M_{1}}$ and the action of $M_{0}$ on $%
M_{1}$ is induced from the action of $L_{0}$ on $L_{1}.$ Additionally, if $%
M_{1}$ and $M_{0}$ are ideals of $L_{1}$ and $L_{0},$ respectively, $%
[l_{0},m_{1}]\in M_{1}$ and $[m_{0},l_{1}]\in M_{1},$ for all $l_{0}\in
L_{0},$ $l_{1}\in L_{1},$ $m_{0}\in M_{0},$ $m_{1}\in M_{1}$ then $M$ is
called an \textit{\ ideal} of $L.$ Consequently, we have the \textit{%
	quotient crossed module} $L/M:L_{1}/M_{1}\overset{\overline{d}}{%
	\longrightarrow }L_{0}/M_{0}$ with the induced boundary map and action.

Let $M:M_{1}\overset{d}{\longrightarrow }M_{0}$ be a subcrossed module and $%
N:N_{1}\overset{d}{\longrightarrow }N_{0}$ be an ideal of $L:L_{1}\overset{d}%
{\longrightarrow }L_{0}.$ Then, we have an isomorphism such that

\[
\frac{M}{M\cap N}\cong \frac{M\oplus N}{N}, 
\]%
where $M\cap N:M_{1}\cap N_{1}\overset{d}{\longrightarrow }M_{0}\cap N_{0}$
and $M\oplus N:M_{1}\oplus N_{1}\overset{d}{\longrightarrow }M_{0}\oplus
N_{0}.$

See \cite{C1} for details.

Now we recall the construction of an actor of a Lie crossed module from \cite{C3}

Let $L:L_{1}\overset{d}{\longrightarrow }L_{0}$ be a Lie crossed
module. A \textit{derivation} from $L_{0}$ to $L_{1}$ is the $\mathbb{K}$%
-linear function $\partial :L_{0}\longrightarrow L_{1}$ such that 
\[
\partial \lbrack l_{0},l_{0}^{\prime }]=[l_{0},\partial (l_{0}^{\prime
})]-[l_{0}^{\prime },\partial (l_{0})],
\]%
for all $l_{0},l_{0}^{\prime }\in L_{0}.$ The set of all derivations is denoted by $Der(L_{0},L_{1})\ $which carries a
Lie algebra structure with the bracket $[\partial _{1},\partial
_{2}]=\partial _{1}(d\partial _{2})-\partial _{2}(d\partial _{1}),$ for all $%
\partial _{1},$ $\partial _{2}\in Der(L_{0},L_{1})$. On the other hand, \textit{a derivation of a Lie crossed module} $L:L_{1}%
\overset{d}{\longrightarrow }L_{0}$ is a pair $(\alpha ,\beta )$ where $%
\alpha \in Der(L_{1})$ and $\beta \in Der(L_{0})$ such that:%
\[
\begin{array}{ll}
(i) & \beta d=d\alpha \\ 
(ii) & \alpha ([l_{0},l_{1}])=[l_{0},\alpha (l_{1})]+[\beta (l_{0}),l_{1}]%
\end{array}%
\]%
for all $l_{0}\in L_{0}$ and $l_{1}\in L_{1}.$ The set of derivations of the Lie crossed module $L:L_{1}\overset{d}{%
	\longrightarrow }L_{0}$ is denoted by $Der(L).$ $Der(L)$ is endowed with a
Lie $\mathbb{K}$-algebra structure with component-wise addition, scalar
multiplication and the bracket $[(\alpha ,\beta ),(\alpha ^{\prime },\beta
^{\prime })]=([\alpha ,\alpha ^{\prime }],[\beta ,\beta ^{\prime }]),$ for
all $(\alpha ,\beta ),(\alpha ^{\prime },\beta ^{\prime })\in Der(L).$

For a given Lie crossed module $L:L_{1}\overset{d}{\longrightarrow }L_{0},$
we have the corresponding crossed module 
\[
\begin{array}{cccc}
\Delta : & Der(L_{0},L_{1}) & \longrightarrow  & Der(L) \\ 
& \partial  & \longmapsto  & (\partial d,d\partial )%
\end{array}%
\]%
with the action of $Der(L)$ on $Der(L_{0},L_{1})$ given by $\left[ (\alpha
,\beta ),\partial \right] =\alpha \partial -\partial \beta $, for all $%
\left( \alpha ,\beta \right) \in Der(L),$ $\partial \in Der(L_{0},L_{1}).$
This crossed module was introduced in \cite{C3} and is an \textit{%
	actor }of $L$, denoted by $Act(L)$. Also see \cite{tamar1}, for a
comprehensive research about the representability of actions.

\begin{definition}
	\cite{C2} Let $L:L_{1}\overset{d}{\longrightarrow }L_{0}$ be a Lie crossed
	module. Then the center of $L$ is the crossed module $Z(L):L_{1}^{L_{0}}%
	\overset{d\left\vert {}\right. }{\longrightarrow }(St_{L_{0}}(L_{1})\cap
	Z(L_{0}))$ where 
	\[
	L_{1}^{L_{0}}=\{l_{1}\in L_{1}:\text{ }[l_{0},l_{1}]=0,\text{ for all }%
	l_{0}\in L_{0}\}
	\]%
	and 
	\[
	St_{L_{0}}(L_{1})=\{l_{0}\in L_{0}:\text{ }[l_{0},l_{1}]=0,\text{ for all }%
	l_{1}\in L_{1}\}.
	\]%
	On the other hand, the \textit{commutator subcrossed module} $[L,L]$ of $L$
	is defined by 
	\[
	\lbrack L,L]:D_{L_{0}}(L_{1})\overset{d|}{\longrightarrow }[L_{0},L_{0}]
	\]%
	where $D_{L_{0}}(L_{1})=\{[l_{0},l_{1}]:l_{0}\in L_{0},l_{1}\in L_{1}\}$ and 
	$[L_{0},L_{0}]$ is the commutator subalgebra of $L_{0}$.
\end{definition}

The group analogues of following proposition can be found in \cite{norrie1}.

\begin{proposition}
	\label{10} Let $L:L_{1}\overset{d}{\longrightarrow }L_{0}$ be a Lie crossed
	module. Then we have the following;\newline
	(i) If $L$ is simply connected, then $L_{1}^{L_{0}}=Z(L_{1})$ and $%
	D_{L_{0}}(L_{1})=[L_{1},L_{1}]$.\newline
	(ii) If $L$ is aspherical, then $Z(L_{0})=St_{L_{0}}(L_{1})\cap Z(L_{0})$.
\end{proposition}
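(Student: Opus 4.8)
The plan is to unwind the definitions in each case and use the crossed module axioms to identify the relevant subalgebras.

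For part (i), assume $L$ is simply connected, i.e.\ $\operatorname{coker}(d)=0$, which means $d:L_1\to L_0$ is surjective. To see $L_1^{L_0}=Z(L_1)$, note that by definition $l_1\in L_1^{L_0}$ iff $[l_0,l_1]=0$ for all $l_0\in L_0$; using surjectivity of $d$, every $l_0$ is of the form $d(l_1')$, so this is equivalent to $[d(l_1'),l_1]=0$ for all $l_1'\in L_1$, which by the second crossed module axiom $[d(l_1'),l_1]=[l_1',l_1]$ is equivalent to $[l_1',l_1]=0$ for all $l_1'\in L_1$, i.e.\ $l_1\in Z(L_1)$. For the second claim $D_{L_0}(L_1)=[L_1,L_1]$, the inclusion $[L_1,L_1]\subseteq D_{L_0}(L_1)$ follows from axiom 2 written in the form $[l_1,l_1']=[d(l_1),l_1']$, and the reverse inclusion follows from writing an arbitrary generator $[l_0,l_1]$ with $l_0=d(l_1')$ and again applying axiom 2. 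One should remark that $D_{L_0}(L_1)$ is actually an ideal of $L_1$ (this is standard for crossed modules), so that the equality of generating sets gives equality of the subalgebras they generate; I would mention this briefly but not belabor it.

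For part (ii), assume $L$ is aspherical, i.e.\ $\ker(d)=0$, so $d$ is injective. The nontrivial inclusion is $Z(L_0)\subseteq St_{L_0}(L_1)$, since the reverse inclusion $St_{L_0}(L_1)\cap Z(L_0)\subseteq Z(L_0)$ is trivial. So take $l_0\in Z(L_0)$ and any $l_1\in L_1$; I want $[l_0,l_1]=0$. Apply $d$ and use the first crossed module axiom: $d([l_0,l_1])=[l_0,d(l_1)]$, and since $l_0\in Z(L_0)$ and $d(l_1)\in L_0$, this bracket is $0$. Thus $[l_0,l_1]\in\ker(d)=0$, so $[l_0,l_1]=0$, giving $l_0\in St_{L_0}(L_1)$. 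Combined with $l_0\in Z(L_0)$ this yields $l_0\in St_{L_0}(L_1)\cap Z(L_0)$, as desired.

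Neither part presents a serious obstacle; the arguments are short diagram-chases through the two crossed module axioms, with surjectivity of $d$ driving part (i) and injectivity driving part (ii). The only point requiring a little care is the passage from equality of generating sets to equality of subalgebras in the second half of part (i), which is why I would note that $D_{L_0}(L_1)$ and $[L_1,L_1]$ are both ideals (hence subalgebras) of $L_1$ at that point.
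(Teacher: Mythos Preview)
Your proof is correct and follows essentially the same route as the paper's: both use surjectivity of $d$ together with the Peiffer identity $[d(l_1'),l_1]=[l_1',l_1]$ for part (i), and injectivity of $d$ together with $d([l_0,l_1])=[l_0,d(l_1)]$ for part (ii). Your added remark about passing from equality of generating sets to equality of subalgebras in the $D_{L_0}(L_1)=[L_1,L_1]$ claim is a point the paper leaves implicit.
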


\begin{proof}
	(i) Let $l_{1}\in L_{1}^{L_{0}}.$ Since $L$ is simply connected, for every $%
	l_{0}\in L_{0}$ there exists $l_{1}^{\prime }\in L_{1}$ such that $%
	d(l_{1}^{\prime })=l_{0}.$ Then $[l_{0},l_{1}]=[d(l_{1}^{\prime }),l_{1}]=0$
	and $[l_{1}^{\prime },l_{1}]=0.$ So $l_{1}\in Z(L_{1})$ i.e $%
	L_{1}^{L_{0}}\subseteq Z(L_{1}).$ Conversely, let $l_{1}\in Z(L_{1}).$ From
	the hypothesis, we have $[l_{0},l_{1}]=[d(l_{1}^{\prime
	}),l_{1}]=[l_{1}^{\prime },l_{1}]=0$. So $l_{1}\in L_{1}^{L_{0}}$ i.e $%
	Z(L_{1})\subseteq L_{1}^{L_{0}}.$ Let $[l_{0},l_{1}]\in D_{L_{0}}(L_{1}).$
	From the hypothesis, we can say that $[l_{0},l_{1}]=[d(l_{1}^{\prime
	}),l_{1}]=[l_{1}^{\prime },l_{1}]\in \lbrack L_{1},L_{1}].$ So we have $%
	D_{L_{0}}(L_{1})\subseteq \lbrack L_{1},L_{1}].$ Let $[l_{1},l_{1}^{\prime
	}]\in \lbrack L_{1},L_{1}].$ Then $[l_{1},l_{1}^{\prime }]=[d(l_{1}^{\prime
}),l_{1}]\in D_{L_{0}}(L_{1})$ i.e $[L_{1},L_{1}]\subseteq D_{L_{0}}(L_{1}).$

(ii) Let $l_{0}\in Z(L_{0}).$ Then we have $%
d([l_{0},l_{1}])=[l_{0},d(l_{1})]=0=d(0).$ Since $L$ is aspherical, $%
[l_{0},l_{1}]=0$ i.e $l_{0}\in St_{L_{0}}(L_{1}).$ So, we have $%
Z(L_{0})\subseteq St_{L_{0}}(L_{1})$ i.e $Z(L_{0})=St_{L_{0}}(L_{1})\cap
Z(L_{0}).$
\end{proof}

\begin{definition}
	A Lie crossed module $L:L_{1}\overset{d}{\longrightarrow }L_{0}$ is called 
	\textit{finite dimensional }if $L_{1}$ and $L_{0}$ are finite dimensional
	Lie algebras.
\end{definition}

\begin{definition}
	\cite{C1} Let $L:L_{1}\overset{d}{\longrightarrow }L_{0}$ be a Lie crossed
	module. If there exists $n\in 
	\mathbb{Z}
	^{+}$ such that $(L_{1},L_{0},d)^{(n)}=0$ then $L$ is called \textit{%
		solvable Lie crossed module. Also, the least positive integer }$n$
	satisfying $(L_{1},L_{0},d)^{(n)}=0$ is called \textit{derived length} of
	the $L$. If there exists $n\in 
	\mathbb{Z}
	^{+}$ such that $(L_{1},L_{0},d)^{n}=0,$ $L$ is called \textit{nilpotent Lie
		crossed module. Also, the least natural }$n$ satisfying $%
	(L_{1},L_{0},d)^{n}=0$ is called \textit{nilpotency class} of $L$.
\end{definition}

\subsection{Class preserving actors of Lie crossed modules}

In this subsection, we introduce the notion of class preserving actor of a
Lie crossed module for constructing relation between isoclinic Lie crossed
modules and their actors. We inspire from Theorem 4.1  \cite{YA} in which
they claim that the class preserving automorphisms groups of isoclinic groups are isomorphic.

\begin{proposition}
	\label{06a} Let $L:L_{1}\overset{d}{\longrightarrow }L_{0}$ be a Lie crossed
	module, $Der_{\mathcal{C}}(L_{0},L_{1}):=\{\delta \in Der(L_{0},l_{1})$ $|$
	there exists $l_{1}\in l_{1}$ such that $\delta (l_{0})=[l_{0},l_{1}],$ for
	all $l_{0}\in L_{0}\}$ and $Der_{_{\mathcal{C}}}(L):=\{(\alpha ,\beta )\in
	Der(L)$ $|$ there exists $l_{0}\in L_{0}$ such that $\alpha (l_{1})=$ $%
	[l_{0},l_{1}],$ $\beta (l_{0}^{\prime })=[l_{0},l_{0}^{\prime }],$ for all $%
	l_{1}\in L_{1},l_{0}^{\prime }\in L_{0}\}$. Then, we have the followings: 
	\newline
	(a) $Der_{\mathcal{C}}(L_{0},L_{1})$ is a Lie subalgebra of $%
	Der(L_{0},L_{1}).$\newline
	(b) $Der_{_{\mathcal{C}}}(L)$ is a Lie subalgebra of $Der(L).$
\end{proposition}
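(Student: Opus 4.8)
The strategy in both parts is the same: exhibit the set in question as the image of an explicit $\mathbb{K}$-linear map from a Lie algebra into $Der(L_{0},L_{1})$ (resp.\ $Der(L)$), observe that such an image is automatically a $\mathbb{K}$-subspace containing $0$, and then check that the map respects brackets up to a sign. Since the image of a Lie (anti)homomorphism is a Lie subalgebra, this is all that is needed.

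\textbf{Part (a).} First I would verify that for each $l_{1}\in L_{1}$ the assignment $\delta_{l_{1}}\colon l_{0}\mapsto [l_{0},l_{1}]$ is genuinely an element of $Der(L_{0},L_{1})$: this is exactly the requirement that $l_{0}\mapsto [l_{0},-]$ be a Lie homomorphism $L_{0}\to Der(L_{1})$, which is part of the definition of a Lie action, and reads $\delta_{l_{1}}([l_{0},l_{0}'])=[l_{0},\delta_{l_{1}}(l_{0}')]-[l_{0}',\delta_{l_{1}}(l_{0})]$. Hence $Der_{\mathcal{C}}(L_{0},L_{1})=\{\delta_{l_{1}}\mid l_{1}\in L_{1}\}$ is the image of the $\mathbb{K}$-linear map $L_{1}\to Der(L_{0},L_{1})$, $l_{1}\mapsto \delta_{l_{1}}$, so it is a subspace. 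The remaining, and main, step is the bracket identity: starting from $[\partial_{1},\partial_{2}]=\partial_{1}(d\partial_{2})-\partial_{2}(d\partial_{1})$, I would use crossed module axiom $(1)$ to rewrite $d(\delta_{l_{1}}(l_{0}))=[l_{0},d(l_{1})]$, then axiom $(2)$ to replace every occurrence of $[d(l_{1}),-]$ by $[l_{1},-]$, and finally the Jacobi identity together with the ``action by derivations'' rule $[l_{0},[l_{1},l_{1}']]=[[l_{0},l_{1}],l_{1}']+[l_{1},[l_{0},l_{1}']]$ to collapse the resulting expression to $[l_{0},[l_{1}',l_{1}]]$. This yields $[\delta_{l_{1}},\delta_{l_{1}'}]=\delta_{[l_{1}',l_{1}]}\in Der_{\mathcal{C}}(L_{0},L_{1})$, so the subspace is bracket-closed, hence a Lie subalgebra.

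\textbf{Part (b).} For each $l_{0}\in L_{0}$ put $\alpha_{l_{0}}:=[l_{0},-]\colon L_{1}\to L_{1}$ and $\beta_{l_{0}}:=\mathrm{ad}_{l_{0}}\colon L_{0}\to L_{0}$. I would check $(\alpha_{l_{0}},\beta_{l_{0}})\in Der(L)$: $\beta_{l_{0}}\in Der(L_{0})$ is Jacobi in $L_{0}$, $\alpha_{l_{0}}\in Der(L_{1})$ is the ``action by derivations'' rule, condition $(i)$ $\beta_{l_{0}}d=d\alpha_{l_{0}}$ is crossed module axiom $(1)$, and condition $(ii)$ $\alpha_{l_{0}}([l_{0}',l_{1}])=[l_{0}',\alpha_{l_{0}}(l_{1})]+[\beta_{l_{0}}(l_{0}'),l_{1}]$ is precisely the Jacobi identity $[l_{0},[l_{0}',l_{1}]]=[l_{0}',[l_{0},l_{1}]]+[[l_{0},l_{0}'],l_{1}]$ of the action. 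Thus $Der_{\mathcal{C}}(L)=\{(\alpha_{l_{0}},\beta_{l_{0}})\mid l_{0}\in L_{0}\}$ is the image of the $\mathbb{K}$-linear map $L_{0}\to Der(L)$, $l_{0}\mapsto (\alpha_{l_{0}},\beta_{l_{0}})$, hence a subspace. For bracket-closure I use that the bracket on $Der(L)$ is componentwise: the $L_{0}$-component of $[(\alpha_{l_{0}},\beta_{l_{0}}),(\alpha_{l_{0}'},\beta_{l_{0}'})]$ is $[\mathrm{ad}_{l_{0}},\mathrm{ad}_{l_{0}'}]=\mathrm{ad}_{[l_{0},l_{0}']}=\beta_{[l_{0},l_{0}']}$, and the $L_{1}$-component sends $l_{1}$ to $[l_{0},[l_{0}',l_{1}]]-[l_{0}',[l_{0},l_{1}]]=[[l_{0},l_{0}'],l_{1}]=\alpha_{[l_{0},l_{0}']}(l_{1})$, again by the Jacobi identity of the action; so the bracket equals $(\alpha_{[l_{0},l_{0}']},\beta_{[l_{0},l_{0}']})\in Der_{\mathcal{C}}(L)$.

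\textbf{Main obstacle.} The only genuinely delicate point is the bracket computation in part (a), where one must combine both crossed module axioms with two forms of the Jacobi identity and keep track of signs; part (b) is essentially the classical fact that inner derivations of a Lie algebra form an ideal, transported to the crossed module setting, and should go through routinely.
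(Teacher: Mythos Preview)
Your proposal is correct and follows essentially the same approach as the paper: the heart of both arguments is the computation showing $[\delta_{l_{1}},\delta_{l_{1}'}](l_{0})=[l_{0},[l_{1}',l_{1}]]$ via crossed module axioms (1), (2) and the Jacobi-type identities, which is exactly the chain of equalities the paper writes out explicitly. Your packaging of the subspace claim as ``image of a $\mathbb{K}$-linear map'' is a minor organizational nicety over the paper's direct check that $\delta-\delta'$ and $k\delta$ stay in the set, and you supply more detail for part (b) than the paper's ``by a similar way'', but the route is the same.
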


\begin{proof}
	($a$) Let $\delta ,\delta ^{\prime }\in Der_{\mathcal{C}}(L_{0},L_{1}).$ We
	first show that $[\delta ,\delta ^{\prime }]\in Der_{\mathcal{C}%
	}(L_{0},L_{1}).$ Since $\delta ,\delta ^{\prime }\in Der_{\mathcal{C}%
}(L_{0},L_{1}),$ there exist $l_{1},l_{1}^{\prime }\in L_{1}$ such that $%
\delta (l_{0})=[l_{0},l_{1}]$ and $\delta ^{\prime
}(l_{0})=[l_{0},l_{1}^{\prime }],$ for all $l_{0}\in L_{0}.$ Then, 
\[
\begin{array}{lll}
\lbrack \delta ,\delta ^{\prime }](l_{0}) & = & \delta d\delta ^{\prime
}(l_{0})-\delta ^{\prime }d\delta (l_{0}) \\ 
& = & \delta d[l_{0},l_{1}^{\prime }]-\delta ^{\prime }d[l_{0},l_{1}] \\ 
& = & \delta \lbrack l_{0},d(l_{1}^{\prime })]-\delta ^{\prime
}[l_{0},d(l_{1})] \\ 
& = & \left[ [l_{0},d(l_{1}^{\prime })],l_{1}\right] -\left[
[l_{0},d(l_{1})],l_{1}^{\prime }\right] \\ 
& = & \left[ l_{1,}[d(l_{1}^{\prime }),l_{0}]\right] +\left[ l_{1}^{\prime
},[l_{0},d(l_{1})]\right] \\ 
& = & -[d(l_{1}^{\prime }),[l_{0},l_{1}]]-[l_{0},[l_{1},d(l_{1}^{\prime
})]]+[l_{1}^{\prime },[l_{0},d(l_{1})]] \\ 
& = & -[l_{1}^{\prime },[l_{0},l_{1}]]-[l_{0},[l_{1},l_{1}^{\prime
}]]+[l_{1}^{\prime },d[l_{0},l_{1}]] \\ 
& = & -[l_{1}^{\prime },[l_{0},l_{1}]]-[l_{0},[l_{1},l_{1}^{\prime
}]]+[l_{1}^{\prime },[l_{0},l_{1}]] \\ 
& = & [l_{0},[l_{1}^{\prime },l_{1}]]%
\end{array}%
\]%
i.e $[\delta ,\delta ^{\prime }]\in Der_{\mathcal{C}}(L_{0},L_{1}).$ On the
other hand, $\delta -\delta ^{\prime }\in Der_{\mathcal{C}}(L_{0},L_{1})$
and $(k\delta )\in Der_{\mathcal{C}}(L_{0},L_{1}),$ we have for any $\delta
,\delta ^{\prime }\in Der_{\mathcal{C}}(L_{0},L_{1})$, $k\in \mathbb{K}$. 
\newline
($b$) By a similar way, we have $[(\alpha ,\beta ),(\alpha ^{\prime },\beta
^{\prime })],$ $(\alpha ,\beta )-$ $(\alpha ^{\prime },\beta ^{\prime }),$ $%
k(\alpha ,\beta )\in Der_{_{\mathcal{C}}}(L)$ for all $(\alpha ,\beta ),$ $%
(\alpha ^{\prime },\beta ^{\prime })\in Der_{_{\mathcal{C}}}(L),$ $k\in 
\mathbb{K}$, as required.
\end{proof}

\begin{proposition}
	\label{06} Let $L:L_{1}\overset{d}{\longrightarrow }L_{0}$ be a Lie crossed
	module. $Act_{\mathcal{C}}(L):Der_{\mathcal{C}}(L_{0},L_{1})\overset{\Delta
		^{\prime }}{\longrightarrow }Der_{_{\mathcal{C}}}(L)$ is a crossed module
	with the action induced from the action of $Der(L)$ over $Der(L_{0},L_{1})$
	such that 
	\[
	\begin{array}{lll}
	Der_{_{\mathcal{C}}}(L)\times Der_{\mathcal{C}}(L_{0},L_{1}) & 
	\longrightarrow & Der_{\mathcal{C}}(L_{0},L_{1}) \\ 
	((\alpha ,\beta ),\delta ) & \longmapsto & (\alpha ,\beta )\cdot \delta
	=\alpha \delta -\delta \beta .%
	\end{array}%
	\]
\end{proposition}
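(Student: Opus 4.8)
The plan is to realize $Act_{\mathcal{C}}(L)$ as a subcrossed module of the actor $Act(L):Der(L_{0},L_{1})\overset{\Delta }{\longrightarrow }Der(L)$ recalled above, in the sense of the notion of subcrossed module given in this section. By Proposition~\ref{06a} the pairs $Der_{\mathcal{C}}(L_{0},L_{1})$ and $Der_{_{\mathcal{C}}}(L)$ are already Lie subalgebras of $Der(L_{0},L_{1})$ and $Der(L)$, so only two things need to be checked: first, that $\Delta $ restricts to a map $\Delta ^{\prime }$ whose image lies in $Der_{_{\mathcal{C}}}(L)$; second, that the action of $Der(L)$ on $Der(L_{0},L_{1})$ restricts to a (necessarily Lie) action of $Der_{_{\mathcal{C}}}(L)$ on $Der_{\mathcal{C}}(L_{0},L_{1})$. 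Granted these, $\Delta ^{\prime }$ is a Lie algebra homomorphism, the restricted action is by derivations, and the two crossed module axioms (equivariance of $\Delta ^{\prime }$ and the Peiffer identity) are inherited verbatim from the crossed module $Act(L)$; this settles the statement.

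\emph{$\Delta $ restricts.} Let $\delta \in Der_{\mathcal{C}}(L_{0},L_{1})$, with $l_{1}\in L_{1}$ such that $\delta (l_{0})=[l_{0},l_{1}]$ for all $l_{0}\in L_{0}$. Then $\Delta (\delta )=(\delta d,d\delta )$, and this pair lies in $Der_{_{\mathcal{C}}}(L)$ with witnessing element $-d(l_{1})\in L_{0}$. Indeed, for every $l_{1}^{\prime }\in L_{1}$, using the defining property of $\delta $ and the second crossed module identity $[d(l_{1}^{\prime }),l_{1}]=[l_{1}^{\prime },l_{1}]$,
\[
(\delta d)(l_{1}^{\prime })=[d(l_{1}^{\prime }),l_{1}]=[l_{1}^{\prime },l_{1}]=[-d(l_{1}),l_{1}^{\prime }],
\]
and for every $l_{0}^{\prime }\in L_{0}$, using the first crossed module identity $d([l_{0}^{\prime },l_{1}])=[l_{0}^{\prime },d(l_{1})]$,
\[
(d\delta )(l_{0}^{\prime })=d([l_{0}^{\prime },l_{1}])=[l_{0}^{\prime },d(l_{1})]=[-d(l_{1}),l_{0}^{\prime }].
\]
Hence $\Delta (\delta )\in Der_{_{\mathcal{C}}}(L)$, so $\Delta $ restricts to $\Delta ^{\prime }:Der_{\mathcal{C}}(L_{0},L_{1})\longrightarrow Der_{_{\mathcal{C}}}(L)$.

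\emph{The action restricts.} Let $(\alpha ,\beta )\in Der_{_{\mathcal{C}}}(L)$ with witnessing element $l_{0}\in L_{0}$, so that $\alpha (l_{1})=[l_{0},l_{1}]$ and $\beta (l_{0}^{\prime })=[l_{0},l_{0}^{\prime }]$ for all $l_{1}\in L_{1}$, $l_{0}^{\prime }\in L_{0}$; let $\delta \in Der_{\mathcal{C}}(L_{0},L_{1})$ with witnessing element $l_{1}\in L_{1}$. Then, for every $l_{0}^{\prime }\in L_{0}$, the action formula gives
\[
\big((\alpha ,\beta )\cdot \delta \big)(l_{0}^{\prime })=\alpha (\delta (l_{0}^{\prime }))-\delta (\beta (l_{0}^{\prime }))=\alpha ([l_{0}^{\prime },l_{1}])-\delta ([l_{0},l_{0}^{\prime }]),
\]
and, using again the defining properties of $\alpha $ and $\delta $,
\[
\alpha ([l_{0}^{\prime },l_{1}])-\delta ([l_{0},l_{0}^{\prime }])=[l_{0},[l_{0}^{\prime },l_{1}]]-[[l_{0},l_{0}^{\prime }],l_{1}]=[l_{0}^{\prime },[l_{0},l_{1}]],
\]
where the last equality is the Jacobi identity (equivalently, the first defining identity of the Lie action of $L_{0}$ on $L_{1}$). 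Thus $(\alpha ,\beta )\cdot \delta \in Der_{\mathcal{C}}(L_{0},L_{1})$ with witnessing element $[l_{0},l_{1}]\in L_{1}$, so the $Der(L)$-action restricts to the claimed action. Combined with the previous paragraph, this exhibits $Act_{\mathcal{C}}(L)$ as a subcrossed module of $Act(L)$, hence a crossed module.

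The only genuinely computational points are the two identifications of witnessing elements, $-d(l_{1})$ in the first step and $[l_{0},l_{1}]$ in the second; the care needed is simply to track the antisymmetry of the brackets and to apply the crossed module and Lie action axioms in the right places. Everything else is formal, since $Act(L)$ is already known to be a crossed module.
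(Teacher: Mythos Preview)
Your proof is correct and is precisely the direct calculation the paper alludes to (the paper's own proof reads, in full, ``It can be shown by a direct calculation''). Your framing of $Act_{\mathcal{C}}(L)$ as a subcrossed module of $Act(L)$ is the natural way to organize that calculation, and your identification of the witnessing elements $-d(l_{1})$ and $[l_{0},l_{1}]$ is accurate.
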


\begin{proof}
	It can be shown by a direct calculation.
\end{proof}

\begin{definition}
	Let $L:L_{1}\overset{d}{\longrightarrow }L_{0}$ be a Lie crossed module. The
	crossed module 
	\[
	Act_{\mathcal{C}}(L):Der_{\mathcal{C}}(L_{0},L_{1})\overset{\Delta _{_{%
				\mathcal{C}}}}{\longrightarrow }Der_{_{\mathcal{C}}}(L)
	\]%
	defined in Proposition \ref{06} will be called as the \textit{class preserving
		actor} of $L$ and will be denoted by $Act_{\mathcal{C}}(L).$
\end{definition}

\begin{remark}
	For any Lie crossed module $L:L_{1}\overset{d}{\longrightarrow }L_{0},$ we
	have $InnAct(L)\trianglelefteq Act_{\mathcal{C}}(L)\leq Act(L).$
\end{remark}

\section{Isoclinic Lie crossed modules}

In this section, we will introduce the notion of isoclinism among Lie
crossed modules and establish the basic theory.

{\noindent \textbf{Notation}} In the sequel of the paper, for a given Lie
crossed module $L:L_{1}\overset{d}{\longrightarrow }L_{0},$ we denote $%
L/Z(L) $ by $\overline{L_{1}}\overset{\overline{d}}{\longrightarrow }%
\overline{L_{0}}$ where $\overline{L_{1}}=L_{1}/L_{1}^{L_{0}}$ and $%
\overline{L_{0}}=L_{0}/(St_{L_{0}}(L_{1})\cap Z(L_{0}))$, for shortness.

\begin{proposition}
	Let $L:L_{1}\overset{d}{\longrightarrow }L_{0}$ be a Lie crossed module.
	Define the maps 
	\[
	\begin{array}{ccccc}
	c_{1} & : & \overline{L_{1}}\times \overline{L_{0}} & \longrightarrow & 
	D_{L_{0}}(L_{1}) \\ 
	&  & (l_{1}+L_{1}^{L_{0}},l_{0}+(St_{L_{0}}(L_{1})\cap Z(L_{0}))) & 
	\longmapsto & [l_{0},l_{1}]%
	\end{array}%
	\]
	and 
	\[
	\begin{array}{ccccc}
	c_{0} & : & \overline{L_{0}}\times \overline{L_{0}} & \longrightarrow & 
	\left[ L_{0},L_{0}\right] \\ 
	&  & (l_{0}+(St_{L_{0}}(L_{1})\cap Z(L_{0})),l_{0}^{^{\prime
		}}+(St_{L_{0}}(L_{1})\cap Z(L_{0}))) & \longmapsto & [l_{0},l_{0}^{\prime }],%
		\end{array}%
		\]
		for all $l_{1}\in L_{1},$ $l_{0},l_{0}^{\prime }\in L_{0}.$ Then the maps $%
		c_{1}$ and $c_{0}$ are well-defined.
	\end{proposition}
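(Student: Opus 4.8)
The plan is to check well-definedness of each map by verifying that its value is unchanged when the representatives of the equivalence classes are altered by elements of the subalgebras being quotiented out. For $c_1$, suppose $l_1+L_1^{L_0}=l_1'+L_1^{L_0}$ and $l_0+(St_{L_0}(L_1)\cap Z(L_0))=l_0'+(St_{L_0}(L_1)\cap Z(L_0))$. Then $l_1'-l_1\in L_1^{L_0}$ and $l_0'-l_0\in St_{L_0}(L_1)\cap Z(L_0)$. I would expand $[l_0',l_1']=[l_0+(l_0'-l_0),\,l_1+(l_1'-l_1)]$ using bilinearity of the action into four terms: $[l_0,l_1]+[l_0,l_1'-l_1]+[l_0'-l_0,l_1]+[l_0'-l_0,l_1'-l_1]$. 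The second and fourth terms vanish because $l_1'-l_1\in L_1^{L_0}$ means $[\,\cdot\,,l_1'-l_1]=0$ for any element of $L_0$; the third term vanishes because $l_0'-l_0\in St_{L_0}(L_1)$ means $[l_0'-l_0,\,\cdot\,]=0$ for any element of $L_1$. Hence $[l_0',l_1']=[l_0,l_1]$, so $c_1$ is well-defined. I should also note in passing that the codomain is correct, i.e.\ $[l_0,l_1]\in D_{L_0}(L_1)$, which is immediate from the definition of $D_{L_0}(L_1)$.

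For $c_0$, suppose $l_0+(St_{L_0}(L_1)\cap Z(L_0))=l_0'+(St_{L_0}(L_1)\cap Z(L_0))$ and similarly $l_0'{}^\prime$ agrees with another representative modulo the same subalgebra; write the differences as $z,z'\in St_{L_0}(L_1)\cap Z(L_0)\subseteq Z(L_0)$. Expanding $[l_0+z,\,l_0'{}^\prime+z']$ by bilinearity of the Lie bracket on $L_0$ gives $[l_0,l_0'{}^\prime]+[l_0,z']+[z,l_0'{}^\prime]+[z,z']$, and the last three terms all vanish because $z,z'$ lie in the center $Z(L_0)$. Thus $[l_0+z,\,l_0'{}^\prime+z']=[l_0,l_0'{}^\prime]$ and $c_0$ is well-defined; again $[l_0,l_0'{}^\prime]\in[L_0,L_0]$ by definition of the commutator subalgebra.

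There is essentially no obstacle here: the argument is a routine bilinearity computation, and the only thing one must be slightly careful about is using the \emph{right} defining property at each step — for $c_1$ the distinction between the roles of $L_1^{L_0}$ (killed when sitting in the second slot, for \emph{any} first argument) and $St_{L_0}(L_1)$ (killed when sitting in the first slot, for \emph{any} second argument), whereas for $c_0$ one only needs centrality in $L_0$, and the extra condition $z\in St_{L_0}(L_1)$ is not used. I would present the two verifications in this order, $c_1$ then $c_0$, since $c_1$ is the one whose argument genuinely exploits the crossed-module structure (the action of $L_0$ on $L_1$) while $c_0$ is just the classical Lie-algebra commutator-map well-definedness from the definition of isoclinism recalled earlier.
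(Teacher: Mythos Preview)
Your proof is correct and follows essentially the same approach as the paper: both verify well-definedness by showing the bracket is unchanged under shifts of the representatives, using that elements of $L_1^{L_0}$ (resp.\ $St_{L_0}(L_1)$, resp.\ $Z(L_0)$) annihilate in the appropriate slot. The only cosmetic difference is that the paper changes one coordinate at a time in two steps, whereas you expand all four bilinear terms at once; the content is identical.
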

	
	\begin{proof}
		Let $(l_{1}+L_{1}^{L_{0}},l_{0}+St_{L_{0}}(L_{1})\cap
		Z(L_{0}))=(m_{1}+L_{1}^{L_{0}},m_{0}+St_{L_{0}}(L_{1})\cap Z(L_{0})),$ then $%
		-l_{1}+m_{1}\in L_{1}^{L_{0}}$ and $-l_{0}+m_{0}\in St_{L_{0}}(L_{1})\cap
		Z(L_{0})\subseteq Z(L_{0}).$ So, we have 
		\[
		\begin{array}{llll}
		c_{1}(\overline{l_{1}},\overline{l_{0}}) & = & [l_{0},l_{1}] &  \\ 
		& = & [l_{0},l_{1}]+[l_{0},(-l_{1}+m_{1})] & (-l_{1}+m_{1}\in L_{1}^{L_{0}})
		\\ 
		& = & [l_{0},m_{1}] &  \\ 
		& = & [(-l_{0}+m_{0}),m_{1}]+[l_{0},m_{1}]\text{ } & (-l_{0}+m_{0}\in
		St_{L_{0}}(L_{1})) \\ 
		& = & [m_{0},m_{1}] &  \\ 
		& = & c_{1}(\overline{m_{1}},\overline{m_{0}}) & 
		\end{array}%
		\]%
		which gives the well-definition of $c_{1}.$\newline
		Similarly, let $\overline{l_{0}},\overline{m_{0}},\overline{l_{0}^{\prime }},%
		\overline{m_{0}^{\prime }}\in \overline{L_{0}}$. If $(\overline{l_{0}},%
		\overline{l_{0}^{\prime }})=(\overline{m_{0}},\overline{m_{0}^{\prime }}),$
		then $-l_{0}+m_{0},$ $-l_{0}^{\prime }+m_{0}^{\prime }\in
		St_{L_{0}}(L_{1})\cap Z(L_{0})\subseteq Z(L_{0}).$ Then, we have 
		\[
		-[l_{0},l_{0}^{\prime }]+[l_{0},m_{0}^{\prime }]=0 
		\]%
		and 
		\[
		\lbrack l_{0},m_{0}^{\prime }]-[m_{0},m_{0}^{\prime }]=0 
		\]%
		from which, we obtain $c_{0}(\overline{m_{0}},\overline{m_{0}^{\prime }}%
		)=c_{0}(\overline{l_{0}},\overline{l_{0}^{\prime }})$, as required.
	\end{proof}
	
	{\noindent }The pair $(c_{1},c_{0})$ will be called as the commutator map of
	the Lie crossed module $L:L_{1}\overset{d}{\longrightarrow }L_{0}.$
	
	\begin{definition}
		\label{04} The Lie crossed modules $L:L_{1}\overset{d_{L}}{\longrightarrow }%
		L_{0}$ and $L^{\prime }:L_{1}^{\prime }\overset{d_{L^{\prime }}}{%
			\longrightarrow }L_{0}^{\prime }$ are \textit{called isoclinic} if there
		exist isomorphisms 
		\[
		(\eta _{1},\eta _{0}):(\overline{L_{1}}\overset{\overline{d_{L}}}{%
			\longrightarrow }\overline{L_{0}})\longrightarrow (\overline{L_{1}^{\prime }}%
		\overset{\overline{d_{L^{\prime }}}}{\longrightarrow }\overline{%
			L_{0}^{\prime }})
		\]%
		and 
		\[
		(\xi _{1},\xi _{0}):(D_{L_{0}}(L_{1})\overset{d_{L}|}{\longrightarrow }%
		[L_{0},L_{0}])\longrightarrow (D_{L_{0}^{\prime }}(L_{1}^{\prime })\overset{%
			d_{L^{\prime }}|}{\longrightarrow }[L_{0}^{\prime },L_{0}^{\prime }])
		\]%
		such that the diagrams 
		\begin{equation}
\xymatrix { {\overline{L_{1}}} \times {\overline{L_{0}}} \ar[rr]^ -{c_1} \ar[d]_{{{\eta}_1} \times {{\eta}_0}}  & & D_{L_{0}}{(L_{1})} \ar[d]^{{\xi}_1} \\ {\overline{{L_{1}^{\prime }}}} \times {\overline{{L_{0}^{\prime }}}}   \ar[rr]_ -{{c_1}'}  & & D_{L_{0}^{\prime }}{(L_{1}^{\prime }) }}
		\end{equation}%
		and 
		\begin{equation}
\xymatrix { {\overline{L_{0}}} \times {\overline{L_{0}}} \ar[rr]^ -{c_0} \ar[d]_{{{\eta}_0} \times {{\eta}_0}}  & & [L_0,L_0] \ar[d]^{{\xi}_0} \\ {\overline{{L_{0}^{\prime }}}} \times {\overline{{L_{0}^{\prime }}}}   \ar[rr]_ -{{c_0}'}  & & [{L_{0}^{\prime }},{L_{0}^{\prime }}] }
		\end{equation}%
		are commutative where $(c_{1},c_{0})$ and $(c_{1}^{\prime },c_{0}^{\prime })$
		are commutator maps of $L$ and $L^{\prime },$ respectively. The pair $((\eta _{1},\eta _{0}),(\xi _{1},\xi _{0}))$ will be called an \textit{isoclinism} from $L$ to $L^{\prime }$ and this situation will be denoted by $((\eta _{1},\eta _{0}),(\xi _{1},\xi _{0})):L\sim
		L^{\prime }.$
		
	\end{definition}
	
	\begin{examples}
		\ \newline
		(1) All abelian Lie crossed modules (crossed modules coincide with their centers) are isoclinic. The pairs $((\eta
		_{1},\eta _{0}),(\xi _{1},\xi _{0}))$ consist of trivial homomorphisms. 
		\newline
		(2) Let $(\eta ,\xi )$ be an isoclinism between the Lie algebras $\mathfrak{g%
		}$ to $\mathfrak{h}$. Then $\mathfrak{g}\overset{id}{\longrightarrow }\mathfrak{g}$
		is isoclinic to $\mathfrak{h}\overset{id}{\longrightarrow }\mathfrak{h}$,
		where $(\eta _{1},\eta _{0})=(\eta ,\eta )$, $(\xi _{1},\xi _{0})=(\xi ,\xi )
		$.\newline
		(3) Let $\mathfrak{g}$ be a Lie algebra and let $I$ be an ideal
		of $\mathfrak{g}$ with $I+Z(\mathfrak{g})=\mathfrak{g}.$ Then $%
		I \overset{inc.}{\hookrightarrow }\mathfrak{g}$ is isoclinic to $%
		\mathfrak{g}\overset{id}{\longrightarrow }\mathfrak{g}.$ Here $(\eta
		_{1},\eta _{0})$ and $(\xi _{1},\xi _{0})$ are defined by $(inc.,inc.)$, $%
		(id,id)$, respectively.
	\end{examples}
	
	\begin{remark}
		If the Lie crossed modules $L$ and $L^{\prime }$ are simply connected or
		finite dimensional, then the commutativity of diagrams $(1)$ with $(2)$ are
		equivalent to the commutativity of the following diagram. 
		\[
\xymatrix { {L/{Z(L)}} \times {L/{Z(L)}} \ar[rr]^-{} \ar[d]_{({\eta_1}, {\eta_0})\times({\eta_1}, {\eta_0})}  & & [L,L] \ar[d]^{({\xi_1},{\xi_0})} \\ {L^{\prime }/{Z(L^{\prime })}} \times {L^{\prime }/{Z(L^{\prime })}} \ar[rr]_-{}  & & [L^{\prime },L^{\prime }] }
		\]
	\end{remark}
	
	\begin{proposition}
		Isoclinism is an equivalence relation.
	\end{proposition}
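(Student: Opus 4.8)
The plan is to verify the three defining properties of an equivalence relation—reflexivity, symmetry, and transitivity—directly from Definition \ref{04}.

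For \emph{reflexivity}, given a Lie crossed module $L:L_{1}\overset{d}{\longrightarrow}L_{0}$, I would take $(\eta_{1},\eta_{0})$ to be the identity isomorphism on $\overline{L_{1}}\overset{\overline{d}}{\longrightarrow}\overline{L_{0}}$ and $(\xi_{1},\xi_{0})$ to be the identity on $D_{L_{0}}(L_{1})\overset{d|}{\longrightarrow}[L_{0},L_{0}]$; the two commutativity squares then hold trivially since $c_{1}'=c_{1}$ and $c_{0}'=c_{0}$.

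For \emph{symmetry}, suppose $((\eta_{1},\eta_{0}),(\xi_{1},\xi_{0})):L\sim L^{\prime}$. Since each of $\eta_{1},\eta_{0},\xi_{1},\xi_{0}$ is an isomorphism (of Lie algebras, and the pairs are morphisms of crossed modules, hence so are their inverses), I would take the candidate isoclinism $L^{\prime}\sim L$ to be $((\eta_{1}^{-1},\eta_{0}^{-1}),(\xi_{1}^{-1},\xi_{0}^{-1}))$. To see that the square (1) for this pair commutes, start from the commuting square (1) for the original isoclinism, namely $\xi_{1}\circ c_{1}=c_{1}'\circ(\eta_{1}\times\eta_{0})$, and pre-compose with $\eta_{1}^{-1}\times\eta_{0}^{-1}$ and post-compose with $\xi_{1}^{-1}$ to obtain $\xi_{1}^{-1}\circ c_{1}'=c_{1}\circ(\eta_{1}^{-1}\times\eta_{0}^{-1})$; the argument for square (2) with $c_{0},c_{0}',\xi_{0},\eta_{0}$ is identical.

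For \emph{transitivity}, suppose $((\eta_{1},\eta_{0}),(\xi_{1},\xi_{0})):L\sim L^{\prime}$ and $((\eta_{1}',\eta_{0}'),(\xi_{1}',\xi_{0}')):L^{\prime}\sim L^{\prime\prime}$. The natural candidate is the pair of composites $((\eta_{1}'\circ\eta_{1},\eta_{0}'\circ\eta_{0}),(\xi_{1}'\circ\xi_{1},\xi_{0}'\circ\xi_{0}))$, which is again a pair of isomorphisms of the relevant crossed modules since composites of crossed-module isomorphisms are such. To check that square (1) commutes for the composite, I would paste the two given squares (1) for $L\sim L^{\prime}$ and $L^{\prime}\sim L^{\prime\prime}$ side by side: from $\xi_{1}\circ c_{1}=c_{1}'\circ(\eta_{1}\times\eta_{0})$ and $\xi_{1}'\circ c_{1}'=c_{1}''\circ(\eta_{1}'\times\eta_{0}')$ one gets $(\xi_{1}'\circ\xi_{1})\circ c_{1}=\xi_{1}'\circ c_{1}'\circ(\eta_{1}\times\eta_{0})=c_{1}''\circ(\eta_{1}'\times\eta_{0}')\circ(\eta_{1}\times\eta_{0})=c_{1}''\circ((\eta_{1}'\circ\eta_{1})\times(\eta_{0}'\circ\eta_{0}))$, using functoriality of the product $\times$; the same pasting with the squares (2) handles the commutator-bracket diagram. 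None of these steps presents a genuine obstacle; the only mild care needed is the bookkeeping observation that the inverses and composites of the $(\eta_{i},\xi_{i})$ still respect the crossed-module structure (boundary maps and actions), which is routine, so I would only remark on it rather than expand it.
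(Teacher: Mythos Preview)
Your proposal is correct and takes exactly the approach the paper intends: the paper's own proof is just the line ``It can be easily checked by a direct calculation,'' and what you have written is precisely that direct calculation spelled out, verifying reflexivity, symmetry, and transitivity by taking identities, inverses, and composites of the isoclinism data and checking the two squares.
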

	
	\begin{proof}
		It can be easily checked by a direct calculation.
	\end{proof}
	
	\begin{proposition}
		\label{05} Let $L:L_{1}\overset{d}{\longrightarrow }L_{0}$ be a Lie crossed
		module and $M:M_{1}\overset{d|}{\longrightarrow }M_{0}$ be its subcrossed
		module. If $L=M\oplus Z(L),$ i.e, $L_{1}=M_{1}\oplus L_{1}^{L_{0}}$ and $%
		L_{0}=M_{0}\oplus (St_{L_{0}}(L_{1})\cap Z(L_{0})),$ then $L$ is isoclinic
		to $M.$
	\end{proposition}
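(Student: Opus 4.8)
The plan is to exhibit an explicit isoclinism $((\eta_1,\eta_0),(\xi_1,\xi_0)):L\sim M$ by using the direct-sum decomposition to identify the relevant central quotients and commutator subcrossed modules. First I would observe that since $M$ is a subcrossed module with $L_1 = M_1\oplus L_1^{L_0}$ and $L_0 = M_0\oplus(St_{L_0}(L_1)\cap Z(L_0))$, the canonical projections $L_1\to M_1$ and $L_0\to M_0$ with kernels $L_1^{L_0}$ and $St_{L_0}(L_1)\cap Z(L_0)$ respectively induce isomorphisms $\overline{L_1}=L_1/L_1^{L_0}\xrightarrow{\sim}M_1$ and $\overline{L_0}=L_0/(St_{L_0}(L_1)\cap Z(L_0))\xrightarrow{\sim}M_0$. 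The key point here is that one must check $Z(M)=0$, i.e.\ that $M_1^{M_0}=0$ and $St_{M_0}(M_1)\cap Z(M_0)=0$; this follows because any element of $M_1$ killed by the $M_0$-action is killed by all of $L_0$ (the complementary summand acts trivially by definition of $L_1^{L_0}$), hence lies in $M_1\cap L_1^{L_0}=0$, and similarly for the bottom. Thus $\overline{M_1}\xrightarrow{\overline{d|}}\overline{M_0}$ is just $M_1\xrightarrow{d|}M_0$, and I take $(\eta_1,\eta_0)$ to be the pair of induced projection isomorphisms, which is a crossed module morphism because the projections are.

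Next I would handle the commutator subcrossed modules. Since the second summand in each slot lies in the center of the ambient Lie algebra (for $L_0$) and is annihilated by the action (for $L_1$, as $L_1^{L_0}\subseteq Z(L)_1$), one computes directly $[L_0,L_0]=[M_0,M_0]$ and $D_{L_0}(L_1)=D_{M_0}(M_1)$: indeed $[m_0+z_0,m_0'+z_0']=[m_0,m_0']$ and $[m_0+z_0,m_1+z_1]=[m_0,m_1]$ whenever $z_0\in St_{L_0}(L_1)\cap Z(L_0)$, $z_1\in L_1^{L_0}$. Hence I take $(\xi_1,\xi_0)$ to be the identity maps $D_{L_0}(L_1)=D_{M_0}(M_1)$ and $[L_0,L_0]=[M_0,M_0]$; these form a crossed module isomorphism from $[L,L]$ to $[M,M]$ since the boundary and action are the restrictions of those of $L$.

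Finally I would verify commutativity of the two squares in Definition \ref{04}. For the square involving $c_1$: given $\overline{l_1}\in\overline{L_1}$ and $\overline{l_0}\in\overline{L_0}$, write $l_1=m_1+z_1$, $l_0=m_0+z_0$ with $m_1\in M_1$, $m_0\in M_0$, $z_1\in L_1^{L_0}$, $z_0\in St_{L_0}(L_1)\cap Z(L_0)$; then $\eta_1(\overline{l_1})=m_1$, $\eta_0(\overline{l_0})=m_0$, so $c_1'(\eta_1\overline{l_1},\eta_0\overline{l_0})=[m_0,m_1]=[l_0,l_1]=\xi_1 c_1(\overline{l_1},\overline{l_0})$, using the computation above and $\xi_1=\mathrm{id}$. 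The square involving $c_0$ is identical with $[m_0,m_0']=[l_0,l_0']$. This is a routine direct check once the decompositions are in place, so I expect no real obstacle; the only thing requiring a moment's care is confirming that $M$ genuinely has trivial center (so that $\overline{M}=M$) and that the action-triviality of $L_1^{L_0}$ is exactly what makes the mixed brackets $[z_0,m_1]$ and $[m_0,z_1]$ vanish — both are immediate from the definitions of $L_1^{L_0}$ and $St_{L_0}(L_1)$.
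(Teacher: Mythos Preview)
Your argument is correct under the reading of $\oplus$ as an internal \emph{direct} sum, and the treatment of the commutator side---showing $D_{L_0}(L_1)=D_{M_0}(M_1)$, $[L_0,L_0]=[M_0,M_0]$ and taking $(\xi_1,\xi_0)=(\mathrm{id},\mathrm{id})$---is exactly what the paper does.

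The one substantive difference is on the central-quotient side. You use the directness of the decomposition to conclude $Z(M)=0$ (so that $\overline{M}=M$) and then take $(\eta_1,\eta_0)$ to be the projection isomorphisms $\overline{L}\to M$. The paper instead proves the intrinsic identities $M_1^{M_0}=M_1\cap L_1^{L_0}$ and $St_{M_0}(M_1)\cap Z(M_0)=M_0\cap\bigl(St_{L_0}(L_1)\cap Z(L_0)\bigr)$, i.e.\ $Z(M)=M\cap Z(L)$, and then applies the second isomorphism theorem $M/(M\cap Z(L))\cong (M\oplus Z(L))/Z(L)=L/Z(L)$, taking $(\eta_1,\eta_0)$ to be the induced \emph{inclusion} maps. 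Note that in this paper $\oplus$ is used for the ordinary sum of subcrossed modules (cf.\ the second isomorphism theorem recalled in the preliminaries), so the paper's argument actually only requires $L=M+Z(L)$ and never uses $M\cap Z(L)=0$. Your shortcut $Z(M)=0$ is valid precisely when the sum is direct; if one reads the hypothesis in the paper's weaker sense, that step fails and you would need to fall back on $Z(M)=M\cap Z(L)$ together with the second isomorphism theorem, as the paper does.
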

	
	\begin{proof}
		First, we show that $M_{1}^{M_{0}}=M_{1}\cap L_{1}^{L_{0}}$ and $%
		St_{M_{0}}(M_{1})\cap Z(M_{0})=M_{0}\cap (St_{L_{0}}(L_{1})\cap Z(L_{0})).$ 
		\newline
		Let $m_{1}\in M_{1}^{M_{0}}.$ For any $l_{0}\in L_{0},$ since $%
		L_{0}=M_{0}\oplus (St_{L_{0}}(L_{1})\cap Z(L_{0}))$ there exist $a_{0}\in
		St_{L_{0}}(L_{1})\cap Z(L_{0})$ and $m_{0}^{\prime }\in M_{0}$ such that $%
		l_{0}=m_{0}^{\prime }+a_{0}.$ We have $[l_{0},m_{1}]=[(m_{0}^{\prime
		}+a_{0}),m_{1}]=0$, so $m_{1}\in M_{1}\cap L_{1}^{L_{0}}.$ Conversely, for
		any $m_{1}\in M_{1}\cap L_{1}^{L_{0}},$ we have $m_{1}\in M_{1}^{M_{0}}.$
		So, $M_{1}^{M_{0}}=M_{1}\cap L_{1}^{L_{0}}.$\newline
		Let $m_{0}\in St_{M_{0}}(M_{1})\cap Z(M_{0}).$ For any $l_{1}\in L_{1},$
		there exist $k_{1}\in M_{1}$ and $a_{1}\in L_{1}^{L_{0}}$ such that $%
		l_{1}=k_{1}+a_{1}.$ Then $[m_{0},l_{1}]=[m_{0},(k_{1}+a_{1})]=0,$ which
		means that $m_{0}\in St_{L_{0}}(L_{1}).$ On the other hand, it is clear that 
		$m_{0}\in Z(L_{0}).$ Then, we obtain $m_{0}\in M_{0}\cap
		(St_{L_{0}}(L_{1})\cap Z(L_{0})).$ By a direct calculation, we get $%
		St_{M_{0}}(M_{1})\cap Z(M_{0})=M_{0}\cap (St_{L_{0}}(L_{1})\cap Z(L_{0})).$
		By the second isomorphism theorem for Lie crossed modules, we have 
		\[
		\begin{array}{lll}
		\dfrac{M}{Z(M)} & = & \dfrac{(M_{1},M_{0},d|)}{%
			(M_{1}^{M_{0}},St_{M_{0}}(M_{1})\cap Z(M_{0}),d|)} \\ 
		& = & \dfrac{%
			\begin{array}{c}
			\\ 
			(M_{1},M_{0},d|)%
			\end{array}%
		}{(M_{1}\cap L_{1}^{L_{0}},M_{0}\cap (St_{L_{0}}(L_{1})\cap Z(L_{0})),d|)}
		\\ 
		& = & \dfrac{%
			\begin{array}{c}
			\\ 
			(M_{1},M_{0},d|)%
			\end{array}%
		}{(M_{1},M_{0},d|)\cap (L_{1}^{L_{0}},St_{L_{0}}(L_{1})\cap Z(L_{0})),d|)}
		\\ 
		& \cong  & \dfrac{%
			\begin{array}{c}
			\\ 
			(M_{1},M_{0},d|)\oplus (L_{1}^{L_{0}},St_{L_{0}}(L_{1})\cap Z(L_{0})),d|)%
			\end{array}%
		}{(L_{1}^{L_{0}},St_{L_{0}}(L_{1})\cap Z(L_{0})),d|)} \\ 
		& = & \dfrac{%
			\begin{array}{c}
			\\ 
			M\oplus Z(L)%
			\end{array}%
		}{Z(L)} \\ 
		& = & \dfrac{%
			\begin{array}{c}
			\\ 
			L%
			\end{array}%
		}{Z(L)}.%
		\end{array}%
		\]%
		Let $[l_{0},l_{1}]\in D_{L_{0}}(L_{1}),$ then there exist $m_{1}\in M_{1},$ $%
		a_{1}\in L_{1}^{L_{0}},$ $m_{0}\in M_{0},$ $a_{0}\in (St_{L_{0}}(L_{1})\cap
		Z(L_{0}))$ such that $l_{1}=m_{1}+a_{1}$ and $l_{0}=m_{0}+a_{0}.$ Since 
		\begin{eqnarray*}
			\lbrack l_{0},l_{1}] &=&[(m_{0}+a_{0}),(m_{1}+a_{1})] \\
			&=&[m_{0},(m_{1}+a_{1})]+[a_{0},(m_{1}+a_{1})] \\
			&=&[m_{0},m_{1}]+[m_{0},a_{1}]+[a_{0},m_{1}]+[a_{0},a_{1}] \\
			&=&[m_{0},m_{1}]\text{,}
		\end{eqnarray*}%
		we have $[l_{0},l_{1}]\in D_{M_{0}}(M_{1}).$ On the other hand, for any $%
		[l_{0},l_{0}^{\prime }]\in \lbrack L_{0},L_{0}]$ there exist $%
		m_{0},m_{0}^{\prime }\in M_{0},$ $a_{0},a_{0}^{\prime }\in
		(St_{L_{0}}(L_{1})\cap Z(L_{0}))$ such that $l_{0}=m_{0}+a_{0},$ $%
		l_{0}^{\prime }=m_{0}^{\prime }+a_{0}^{\prime },$ from which we get 
		\begin{eqnarray*}
			\lbrack l_{0},l_{0}^{\prime }] &=&[m_{0}+a_{0},m_{0}^{\prime }+a_{0}^{\prime
			}] \\
			&=&[m_{0},m_{0}^{\prime }]+[a_{0},m_{0}^{\prime }]+[m_{0},a_{0}^{\prime
			}]+[a_{0},a_{0}^{\prime }] \\
			&=&[m_{0},m_{0}^{\prime }].
		\end{eqnarray*}%
		Finally, the Lie crossed modules $L$ and $M$ are isoclinic where the
		isomorphisms $(\eta _{1},\eta _{0})$ and $(\xi _{1},\xi _{0})$ are defined
		by $(inc.,inc.)$, $(id,id)$, respectively.
	\end{proof}
	
	\begin{remark}
		If $M:M_{1}\overset{d|}{\longrightarrow }M_{0}$ is a finite dimensional,
		then the converse of Proposition \ref{05} is true.
	\end{remark}
	
	\begin{proposition}
		Let $L:L_{1}\overset{d_{L}}{\longrightarrow }L_{0}$ and $L^{\prime
		}:L_{1}^{\prime }\overset{d_{L^{\prime }}}{\longrightarrow }L_{0}^{\prime }$
		be isoclinic crossed mo\-dules.\newline
		(i) If $L$ and $L^{\prime }$ are aspherical, then $L_{0}$ and $L_{0}^{\prime
		}$ are isoclinic Lie algebras$.$\newline
		(ii) If $L$ and $L^{\prime }$ are simply connected, then $L_{1}$ and $%
		L_{1}^{\prime }$ are isoclinic Lie algebras$.$
	\end{proposition}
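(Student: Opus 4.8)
The plan is to unwind Definition \ref{04} in each of the two hypotheses and to recognise the data that survives as a Lie algebra isoclinism in the sense of the definition recalled in the subsection ``Isoclinism among Lie algebras''. So I would fix an isoclinism $((\eta_1,\eta_0),(\xi_1,\xi_0)):L\sim L'$; part (i) will use only $\eta_0,\xi_0$ together with the square $(2)$, and part (ii) only $\eta_1,\xi_1$ together with the square $(1)$, so the two halves are genuinely independent.

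For (i): by Proposition \ref{10}(ii), asphericity of $L$ gives $St_{L_0}(L_1)\cap Z(L_0)=Z(L_0)$, and likewise $St_{L_0'}(L_1')\cap Z(L_0')=Z(L_0')$. Hence $\overline{L_0}=L_0/Z(L_0)$ and $\overline{L_0'}=L_0'/Z(L_0')$, so $\eta_0$ is an isomorphism $L_0/Z(L_0)\to L_0'/Z(L_0')$, $\xi_0$ is an isomorphism $[L_0,L_0]\to[L_0',L_0']$, and the maps $c_0,c_0'$ are literally the commutator maps of the Lie algebras $L_0,L_0'$. The square $(2)$ is then exactly the commuting diagram demanded by the Lie algebra definition of isoclinism, so $(\eta_0,\xi_0):L_0\sim L_0'$ and there is nothing further to do.

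For (ii): by Proposition \ref{10}(i), simple connectedness of $L$ gives $L_1^{L_0}=Z(L_1)$ and $D_{L_0}(L_1)=[L_1,L_1]$, with the analogous statements for $L'$. Thus $\overline{L_1}=L_1/Z(L_1)$, so $\eta_1$ becomes an isomorphism $L_1/Z(L_1)\to L_1'/Z(L_1')$ and $\xi_1$ an isomorphism $[L_1,L_1]\to[L_1',L_1']$. What remains is to verify the commuting square relating the commutator maps $c_{L_1}$, $c_{L_1'}$ of these Lie algebras, and this is the one point that is not immediate: the map $c_1$ of Definition \ref{04} is defined on $\overline{L_1}\times\overline{L_0}$, not on $\overline{L_1}\times\overline{L_1}$, so the square cannot simply be read off. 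I would bridge this using the second crossed-module axiom, which yields $[l_1,l_1']=[d(l_1),l_1']=c_1(\overline{l_1'},\overline{d}(\overline{l_1}))$ for all $l_1,l_1'\in L_1$, and the corresponding identity in $L'$. Combining this with the commutativity of the square $(1)$ and with the fact that $(\eta_1,\eta_0)$ is a crossed-module morphism, hence $\eta_0\overline{d}=\overline{d'}\eta_1$, one gets
\[
\xi_1\bigl(c_{L_1}(\overline{l_1},\overline{l_1'})\bigr)=c_1'\bigl(\eta_1(\overline{l_1'}),\overline{d'}(\eta_1(\overline{l_1}))\bigr)=c_{L_1'}\bigl(\eta_1(\overline{l_1}),\eta_1(\overline{l_1'})\bigr),
\]
which is precisely the square needed to conclude $(\eta_1,\xi_1):L_1\sim L_1'$.

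I expect the domain mismatch in part (ii) just described to be the only real obstacle; everything else is a matter of substituting the identifications furnished by Proposition \ref{10} into Definition \ref{04} and comparing with the Lie algebra definition of isoclinism. A minor verification to keep in mind along the way is the well-definedness of the induced boundary $\overline{d}\colon\overline{L_1}\to\overline{L_0}$ (equivalently $d(L_1^{L_0})\subseteq St_{L_0}(L_1)\cap Z(L_0)$), but this is already part of the quotient crossed module $L/Z(L)$ used throughout the section.
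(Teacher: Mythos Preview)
Your proposal is correct and follows essentially the same route as the paper's proof: in both parts you invoke Proposition~\ref{10} to identify $\overline{L_0}$ with $L_0/Z(L_0)$ (resp.\ $\overline{L_1}$ with $L_1/Z(L_1)$ and $D_{L_0}(L_1)$ with $[L_1,L_1]$) and then read off the Lie algebra isoclinism $(\eta_0,\xi_0)$ (resp.\ $(\eta_1,\xi_1)$) from the data in Definition~\ref{04}. Your treatment of~(ii) is in fact more careful than the paper's, which simply asserts that $(\eta_1,\xi_1)$ is an isoclinism without checking the commutator square; your use of the Peiffer identity and of $\eta_0\overline{d_L}=\overline{d_{L'}}\eta_1$ to pass from the square~$(1)$ on $\overline{L_1}\times\overline{L_0}$ to the required square on $L_1/Z(L_1)\times L_1/Z(L_1)$ supplies exactly the missing verification.
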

	
	\begin{proof}
		Let $L:L_{1}\overset{d_{L}}{\longrightarrow }L_{0}$ and $L^{\prime
		}:L_{1}^{\prime }\overset{d_{L^{\prime }}}{\longrightarrow }L_{0}^{\prime }$
		be isoclinic Lie crossed modules. Then we have the isomorphisms 
		\begin{eqnarray*}
			(\eta _{1},\eta _{0}) &:&(\overline{L_{1}}\overset{\overline{d_{L}}}{%
				\longrightarrow }\overline{L_{0}})\longrightarrow (\overline{L_{1}^{\prime }}%
			\overset{\overline{d_{L^{\prime }}}}{\longrightarrow }\overline{%
				L_{0}^{\prime }}) \\
			(\xi _{1},\xi _{0}) &:&(D_{L_{0}}(L_{1})\overset{d_{L}|}{\longrightarrow }%
			[L_{0},L_{0}])\longrightarrow (D_{L_{0}^{\prime }}(L_{1}^{\prime })\overset{%
				d_{L^{\prime }}|}{\longrightarrow }[L_{0}^{\prime },L_{0}^{\prime }])
		\end{eqnarray*}%
		which makes diagrams (1) and (2) commutative.
		
		(i) Since $L$ and $L^{\prime }$ are aspherical, we have $Z(L_{0})\subseteq
		St_{L_{0}}(L_{1}),$ $Z(L_{0}^{\prime })\subseteq St_{L_{0}^{\prime
			}}(L_{1}^{\prime }).$ Consequently, $\eta _{0}$ is an isomorphism between $%
			L_{0}/Z(L_{0})$ and $L_{0}^{\prime }/Z(L_{0}^{\prime }).$ So the pair $%
			\left( \eta _{0},\xi _{0}\right) $ is an isoclinism from $L_{0}$ to $%
			L_{0}^{\prime }.$
			
			(ii) Since $L$ and $L^{\prime }$ are simply connected, we have $%
			L_{1}^{L_{0}}=Z(L_{1}),$ $L_{1}^{\prime L_{0}^{\prime }}=Z(L_{1}^{\prime }),$
			$D_{L_{0}}(L_{1})=[L_{1},L_{1}]$ and $D_{L_{0}^{\prime }}(L_{1}^{\prime
			})=[L_{1}^{\prime },L_{1}^{\prime }].$ So we have the isomorphisms $\eta
			_{1}:L_{1}/Z(L_{1})\longrightarrow L_{1}^{\prime }/Z(L_{1}^{\prime }),$ $\xi
			_{1}:[L_{1},L_{1}]\longrightarrow \lbrack L_{1}^{\prime },L_{1}^{\prime }].$
			The pair $\left( \eta _{1},\xi _{1}\right) $ is an isoclinism from $L_{1}$
			to $L_{1}^{\prime },$ as required.
		\end{proof}
		
		\begin{proposition} \label{prop23}
			Let $L:L_{1}\overset{d_{L}}{\longrightarrow }L_{0}$ and $L^{\prime
			}:L_{1}^{\prime }\overset{d_{L^{\prime }}}{\longrightarrow }L_{0}^{\prime }$
			be isoclinic finite dimensional Lie crossed modules. Then $L_{1}$ and $L_{0}$
			are isoclinic to $L_{1}^{\prime }$ and $L_{0}^{\prime },$ respectively.
		\end{proposition}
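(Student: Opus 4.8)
The plan is to extract the two isoclinisms $L_{0}\sim L_{0}^{\prime}$ and $L_{1}\sim L_{1}^{\prime}$ directly from the given isoclinism $((\eta_{1},\eta_{0}),(\xi_{1},\xi_{0}))\colon L\sim L^{\prime}$: I would keep $\xi_{0}$ (resp.\ a restriction of $\xi_{1}$) as the derived-subalgebra isomorphism, and push $\eta_{0}$ (resp.\ $\eta_{1}$) down to the genuine central quotient. The two observations that make this work are the inclusions $St_{L_{0}}(L_{1})\cap Z(L_{0})\subseteq Z(L_{0})$ and $L_{1}^{L_{0}}\subseteq Z(L_{1})$ --- the second because, by axiom (2) of a crossed module, $[l_{0},l_{1}]=0$ for all $l_{0}$ forces $[l_{1}^{\prime},l_{1}]=[d(l_{1}^{\prime}),l_{1}]=0$ for all $l_{1}^{\prime}$ --- together with axiom (2) itself, which rewrites the bracket on $L_{1}$ as an $L_{0}$-action via $[l_{1},l_{1}^{\prime}]=[d(l_{1}),l_{1}^{\prime}]$. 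Consequently there are canonical surjections $p_{0}\colon\overline{L_{0}}\to L_{0}/Z(L_{0})$ and $p_{1}\colon\overline{L_{1}}\to L_{1}/Z(L_{1})$ (and primed analogues for $L^{\prime}$), through which the commutator maps $c_{L_{0}},c_{L_{1}}$ of the Lie algebras factor from $c_{0},c_{1}$, i.e.\ $c_{L_{0}}\circ(p_{0}\times p_{0})=c_{0}$ and $c_{L_{1}}(l_{1}+Z(L_{1}),l_{1}^{\prime}+Z(L_{1}))=[d(l_{1}),l_{1}^{\prime}]=c_{1}(\overline{l_{1}^{\prime}},\overline{d}(\overline{l_{1}}))$.

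For $L_{0}$: one notes that $\ker p_{0}=Z(L_{0})/(St_{L_{0}}(L_{1})\cap Z(L_{0}))$ is exactly the radical $\{\overline{l_{0}}\in\overline{L_{0}}\colon c_{0}(\overline{l_{0}},\overline{L_{0}})=0\}$ of the form $c_{0}$. Since diagrams (1) and (2) commute and $\eta_{0},\xi_{0}$ are isomorphisms, applying diagram (2) to $(\eta_{0},\xi_{0})$ and then to its inverse shows $\eta_{0}(\ker p_{0})=\ker p_{0}^{\prime}$, so $\eta_{0}$ descends to an isomorphism $\alpha_{0}\colon L_{0}/Z(L_{0})\to L_{0}^{\prime}/Z(L_{0}^{\prime})$ with $p_{0}^{\prime}\eta_{0}=\alpha_{0}p_{0}$. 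Then the factorization $c_{L_{0}}\circ(p_{0}\times p_{0})=c_{0}$, its primed version, and the commutativity of (2) give $\xi_{0}\circ c_{L_{0}}=c_{L_{0}^{\prime}}\circ(\alpha_{0}\times\alpha_{0})$ with no further computation; hence $(\alpha_{0},\xi_{0})\colon L_{0}\sim L_{0}^{\prime}$.

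For $L_{1}$ I would run the same scheme after expressing everything through $c_{1}$ and $\overline{d}$ by axiom (2): there $[L_{1},L_{1}]$ is the subspace spanned by $c_{1}(\overline{L_{1}}\times\mathrm{Im}\,\overline{d})$, while $\ker p_{1}=Z(L_{1})/L_{1}^{L_{0}}=\{\overline{l_{1}}\in\overline{L_{1}}\colon c_{1}(\overline{l_{1}},\mathrm{Im}\,\overline{d})=0\}$. Since $(\eta_{1},\eta_{0})$ is a crossed module morphism, $\eta_{0}\overline{d}=\overline{d^{\prime}}\eta_{1}$, whence $\eta_{0}(\mathrm{Im}\,\overline{d})=\mathrm{Im}\,\overline{d^{\prime}}$; combining this with the commutativity of (1) shows that $\xi_{1}$ restricts to an isomorphism $\delta_{1}\colon[L_{1},L_{1}]\to[L_{1}^{\prime},L_{1}^{\prime}]$ and that $\eta_{1}(\ker p_{1})=\ker p_{1}^{\prime}$, so $\eta_{1}$ descends to an isomorphism $\alpha_{1}\colon L_{1}/Z(L_{1})\to L_{1}^{\prime}/Z(L_{1}^{\prime})$. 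Invoking (1) once more on the formula $c_{L_{1}}(l_{1}+Z(L_{1}),l_{1}^{\prime}+Z(L_{1}))=c_{1}(\overline{l_{1}^{\prime}},\overline{d}(\overline{l_{1}}))$, together with $p_{1}^{\prime}\eta_{1}=\alpha_{1}p_{1}$ and $\eta_{0}\overline{d}=\overline{d^{\prime}}\eta_{1}$, yields $\delta_{1}\circ c_{L_{1}}=c_{L_{1}^{\prime}}\circ(\alpha_{1}\times\alpha_{1})$; hence $(\alpha_{1},\delta_{1})\colon L_{1}\sim L_{1}^{\prime}$, which completes the argument.

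The only real obstacle I anticipate is checking that the descended maps $\alpha_{0},\alpha_{1}$ are bijective, i.e.\ that $\eta_{0}$ and $\eta_{1}$ carry the "central parts" of $\overline{L_{0}}$ and $\overline{L_{1}}$ \emph{exactly} onto those of the primed modules; this is precisely where the commutativity of diagrams (1) and (2) has to be used, applied also to the inverse isoclinism. Everything else is routine bookkeeping, with crossed module axiom (2) carrying the weight of translating the intrinsic structure of $L_{1}$ into the $L_{0}$-action in which diagram (1) is phrased. (Finite dimensionality is used only to make the one-diagram reformulation of the Remark after Definition \ref{04} available; the scheme above works from the two-diagram form of Definition \ref{04} directly.)
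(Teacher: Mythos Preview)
Your proposal is correct and follows the same overall strategy as the paper: descend $\eta_{0}$ and $\eta_{1}$ to the genuine central quotients $L_{i}/Z(L_{i})$, and restrict $\xi_{0}$ (resp.\ $\xi_{1}$) to $[L_{0},L_{0}]$ (resp.\ $[L_{1},L_{1}]$), then read off the Lie-algebra isoclinisms. The paper's own proof asserts exactly these descended/restricted maps $\eta_{1}^{\prime},\eta_{0}^{\prime},\xi_{1}|,\xi_{0}$ but does so in a single sentence, invoking finite dimensionality to claim that $\xi_{1}|:[L_{1},L_{1}]\to[L_{1}^{\prime},L_{1}^{\prime}]$ is an isomorphism without saying why.

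Your argument is in fact more informative: by characterising $\ker p_{0}$ as the radical of $c_{0}$ and $\ker p_{1}$ as $\{\overline{l_{1}}:c_{1}(\overline{l_{1}},\mathrm{Im}\,\overline{d})=0\}$, and by using $\eta_{0}\overline{d}=\overline{d^{\prime}}\eta_{1}$, you show \emph{directly} from the commutativity of diagrams~(1) and~(2) (and of their inverses) that $\eta_{i}$ carries $\ker p_{i}$ onto $\ker p_{i}^{\prime}$ and that $\xi_{1}([L_{1},L_{1}])=[L_{1}^{\prime},L_{1}^{\prime}]$. This removes any appeal to dimension counting, so your closing remark that finite dimensionality is not actually used is justified; the paper's hypothesis appears to be a convenience (matching the Remark after Definition~\ref{04}) rather than a genuine requirement for this particular proposition.
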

		
		\begin{proof}
			Let $L:L_{1}\overset{d_{L}}{\longrightarrow }L_{0}$ and $L^{\prime
			}:L_{1}^{\prime }\overset{d_{L^{\prime }}}{\longrightarrow }L_{0}^{\prime }$
			be isoclinic Lie crossed modules. Then we have the crossed module
			isomorphisms 
			\begin{eqnarray*}
				(\eta _{1},\eta _{0}) &:&(\overline{L_{1}}\overset{\overline{d_{L}}}{%
					\longrightarrow }\overline{L_{0}})\longrightarrow (\overline{L_{1}^{\prime }}%
				\overset{\overline{d_{L^{\prime }}}}{\longrightarrow }\overline{%
					L_{0}^{\prime }}) \\
				(\xi _{1},\xi _{0}) &:&(D_{L_{0}}(L_{1})\overset{d_{L}|}{\longrightarrow }%
				[L_{0},L_{0}])\longrightarrow (D_{L_{0}^{\prime }}(L_{1}^{\prime })\overset{%
					d_{L^{\prime }}|}{\longrightarrow }[L_{0}^{\prime },L_{0}^{\prime }])
			\end{eqnarray*}%
			which makes diagrams (1) and (2) commutative. Since $L_{1}$ and $%
			L_{1}^{\prime }$ are finite dimensional, the restriction $\xi
			_{1}|:[L_{1},L_{1}]\longrightarrow \lbrack L_{1}^{\prime },L_{1}^{\prime }]$
			is also an isomorphism. Similarly, we have the isomorphisms $\eta
			_{1}^{\prime }:L_{1}/Z(L_{1})\longrightarrow L_{1}^{\prime }/Z(L_{1}^{\prime
			}),\eta _{1}^{\prime }(l_{1}Z(L_{1}))=l_{1}^{\prime }Z(L_{1}^{\prime })$, $%
			\eta _{0}^{\prime }:L_{0}/Z(L_{0})\longrightarrow L_{0}^{\prime
			}/Z(L_{0}^{\prime }),\eta _{0}^{\prime }(l_{0}Z(L_{0}))=l_{0}^{\prime
		}Z(L_{0}^{\prime })$, and $\xi _{0}$ which make $L_{1}$ and $L_{0}$
		isoclinic to $L_{1}^{\prime }$ and $L_{0}^{\prime },$ respectively.
	\end{proof}
	
	We will give the relation between the class preserving actors of Lie crossed
	modules and isoclinism. 
	
	\begin{theorem}
		\label{teo} Let $L:L_{1}\overset{d}{\longrightarrow }L_{0}$ and $K:K_{1}%
		\overset{d_{K}}{\longrightarrow }K_{0}$ be isoclinic Lie crossed modules.
		Then, we have $Der_{\mathcal{C}}(L_{0},L_{1})\cong Der_{\mathcal{C}%
		}(K_{0},K_{1}).$
	\end{theorem}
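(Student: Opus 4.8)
The plan is to identify $Der_{\mathcal{C}}(L_0,L_1)$ with the central quotient $\overline{L_1}=L_1/L_1^{L_0}$ as a Lie $\mathbb{K}$-algebra, do the same for $K$, and then transport via the isomorphism $\eta_1\colon\overline{L_1}\to\overline{K_1}$ that comes with the isoclinism.

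First I would examine the assignment $l_1\mapsto\delta_{l_1}$, where $\delta_{l_1}\colon L_0\to L_1$ is given by $\delta_{l_1}(l_0)=[l_0,l_1]$. A short check using the Jacobi identity for the action of $L_0$ on $L_1$ shows that each $\delta_{l_1}$ belongs to $Der(L_0,L_1)$, and by the very definition of $Der_{\mathcal{C}}(L_0,L_1)$ the induced map $L_1\to Der_{\mathcal{C}}(L_0,L_1)$ is onto. Its kernel is $\{l_1\in L_1 : [l_0,l_1]=0\text{ for all }l_0\in L_0\}=L_1^{L_0}$, so it descends to a bijective linear map $\overline{L_1}\to Der_{\mathcal{C}}(L_0,L_1)$.

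Next I would match the Lie brackets. The computation already carried out inside the proof of Proposition \ref{06a} gives $[\delta_{l_1},\delta_{l_1'}]=\delta_{[l_1',l_1]}$, so $l_1\mapsto\delta_{l_1}$ is an anti-homomorphism of Lie algebras; precomposing with $l_1\mapsto -l_1$ (that is, sending $\overline{l_1}$ to the derivation $l_0\mapsto[l_0,-l_1]$) turns it into an honest homomorphism. Hence $\overline{L_1}\cong Der_{\mathcal{C}}(L_0,L_1)$ as Lie $\mathbb{K}$-algebras, and applying the same reasoning to $K$ gives $\overline{K_1}\cong Der_{\mathcal{C}}(K_0,K_1)$.

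Finally, since $L\sim K$, Definition \ref{04} provides a crossed module isomorphism $(\eta_1,\eta_0)$ between $\overline{L_1}\to\overline{L_0}$ and $\overline{K_1}\to\overline{K_0}$; in particular $\eta_1\colon\overline{L_1}\to\overline{K_1}$ is an isomorphism of Lie algebras. Composing the three isomorphisms $Der_{\mathcal{C}}(L_0,L_1)\cong\overline{L_1}$, $\eta_1$, and $\overline{K_1}\cong Der_{\mathcal{C}}(K_0,K_1)$ yields the claim. I expect the only slightly delicate point to be keeping track of signs when upgrading the anti-homomorphism $l_1\mapsto\delta_{l_1}$ to a genuine homomorphism; everything else is either immediate from the definitions or already recorded in Proposition \ref{06a}.
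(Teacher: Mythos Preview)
Your argument is correct and is essentially the same as the paper's: the paper sends $\delta=\delta_{a_1}$ to $\lambda=\delta_{a_1'}$ where $\eta_1(\overline{a_1})=\overline{a_1'}$, which is exactly your composite $Der_{\mathcal{C}}(L_0,L_1)\cong\overline{L_1}\xrightarrow{\eta_1}\overline{K_1}\cong Der_{\mathcal{C}}(K_0,K_1)$ written out directly rather than factored. Your version is in fact more complete, since you explicitly verify surjectivity, compute the kernel, and handle the bracket compatibility (the sign issue you flag), none of which the paper's proof spells out.
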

	
	\begin{proof}
		We have the isomorphism,%
		\[
		\begin{array}{llll}
		\phi : & Der_{\mathcal{C}}(L_{0},L_{1}) & \longrightarrow & Der_{\mathcal{C}%
		}(K_{0},K_{1}) \\ 
		& \multicolumn{1}{c}{\delta} & \multicolumn{1}{c}{\longmapsto} & 
		\multicolumn{1}{c}{\lambda}%
		\end{array}%
		\]%
		defined as follows;
		
		Since $L$ and $K$ are isoclinic, there exist isomorphisms 
		\begin{eqnarray*}
			(\eta _{1},\eta _{0}) &:&(\overline{L_{1}}\overset{d_{L}|}{\longrightarrow }%
			\overline{L_{0}})\longrightarrow (\overline{K_{1}}\overset{d_{K}|}{%
				\longrightarrow }\overline{K_{0}}) \\
			(\xi _{1},\xi _{0}) &:&(D_{L_{0}}(L_{1})\overset{d_{L}|}{\longrightarrow }%
			[L_{0},L_{0}])\longrightarrow (D_{K_{0}}(K_{1})\overset{d_{K}|}{%
				\longrightarrow }[K_{0},K_{0}])
		\end{eqnarray*}%
		which make the diagrams (1) and (2) commutative. Let $\delta \in Der_{%
			\mathcal{C}}(L_{0},L_{1})$, $k_{0}\in K_{0}-(St_{K_{0}}(K_{1})\cap Z(K_{0}))$
		and $\overline{k_{0}}\in \overline{K_{0}}$. Set $\overline{l_{0}}=\eta
		_{0}^{-1}(\overline{k_{0}})$. On the other hand, there exists $a_{1}\in L_{1}
		$ such that $\delta (l_{0})=\left[ l_{0},a_{1}\right] $. Let $\eta _{1}(%
		\overline{a_{1}})=\overline{a_{1}^{\prime }}.$ Define 
		\[
		\lambda (k_{0})=\left\{ 
		\begin{array}{lllc}
		\lbrack k_{0},a_{1}^{\prime }], &  & k_{0}\in K_{0}-(St_{K_{0}}(K_{1})\cap
		Z(K_{0})) &  \\ 
		0, &  & k_{0}\in St_{K_{0}}(K_{1})\cap Z(K_{0}). & 
		\end{array}%
		\right. 
		\]
	\end{proof}
	
	\begin{proposition}
		\label{pro}If $L$ and $K$ are finite dimensional non-abelian isoclinic Lie
		crossed modules, then $Der_{_{\mathcal{C}}}(L)\cong Der_{_{\mathcal{C}}}(K).$
	\end{proposition}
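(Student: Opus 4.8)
The plan is to identify $Der_{\mathcal{C}}(L)$ intrinsically with the central quotient $\overline{L_{0}}$ of the base algebra (and likewise $Der_{\mathcal{C}}(K)$ with $\overline{K_{0}}$), so that the isoclinism produces the desired isomorphism at once. Concretely, I would consider the map
\[
\psi:L_{0}\longrightarrow Der_{\mathcal{C}}(L),\qquad l_{0}\longmapsto(\alpha_{l_{0}},\beta_{l_{0}}),
\]
where $\alpha_{l_{0}}:L_{1}\to L_{1}$ sends $l_{1}$ to $[l_{0},l_{1}]$ and $\beta_{l_{0}}:L_{0}\to L_{0}$ sends $l_{0}^{\prime}$ to $[l_{0},l_{0}^{\prime}]$. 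Using the crossed-module identities, the Jacobi identity in $L_{0}$, and the Jacobi identity for the action of $L_{0}$ on $L_{1}$, one checks that $\alpha_{l_{0}}\in Der(L_{1})$, $\beta_{l_{0}}\in Der(L_{0})$ and that $(\alpha_{l_{0}},\beta_{l_{0}})$ satisfies conditions $(i)$ and $(ii)$ of a derivation of a crossed module, so $(\alpha_{l_{0}},\beta_{l_{0}})\in Der(L)$; and since $l_{0}$ itself is a witnessing element, $(\alpha_{l_{0}},\beta_{l_{0}})\in Der_{\mathcal{C}}(L)$. By the very definition of $Der_{\mathcal{C}}(L)$ every one of its elements arises in this way, so $\psi$ is surjective. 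It is clearly $\mathbb{K}$-linear and it is a homomorphism of Lie algebras: its base component is the usual identity $\beta_{[l_{0},l_{0}^{\prime}]}=[\beta_{l_{0}},\beta_{l_{0}^{\prime}}]$ in $L_{0}$, and its top component $\alpha_{[l_{0},l_{0}^{\prime}]}=[\alpha_{l_{0}},\alpha_{l_{0}^{\prime}}]$ is precisely the action Jacobi identity $[[l_{0},l_{0}^{\prime}],l_{1}]=[l_{0},[l_{0}^{\prime},l_{1}]]-[l_{0}^{\prime},[l_{0},l_{1}]]$.

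Next I would compute $\ker\psi$: one has $\psi(l_{0})=0$ exactly when $[l_{0},l_{1}]=0$ for all $l_{1}\in L_{1}$ and $[l_{0},l_{0}^{\prime}]=0$ for all $l_{0}^{\prime}\in L_{0}$, that is, exactly when $l_{0}\in St_{L_{0}}(L_{1})\cap Z(L_{0})$. Hence $\psi$ induces a Lie algebra isomorphism $\overline{L_{0}}=L_{0}/(St_{L_{0}}(L_{1})\cap Z(L_{0}))\cong Der_{\mathcal{C}}(L)$, and the same construction for $K$ gives $\overline{K_{0}}\cong Der_{\mathcal{C}}(K)$. Finally, since $L$ and $K$ are isoclinic, Definition \ref{04} supplies in particular an isomorphism $(\eta_{1},\eta_{0})$ between the central-quotient crossed modules of $L$ and $K$, whose base component $\eta_{0}:\overline{L_{0}}\to\overline{K_{0}}$ is a Lie algebra isomorphism; composing the three isomorphisms yields $Der_{\mathcal{C}}(L)\cong\overline{L_{0}}\cong\overline{K_{0}}\cong Der_{\mathcal{C}}(K)$, as asserted.

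The only parts requiring genuine (though entirely routine) care are the verifications that $(\alpha_{l_{0}},\beta_{l_{0}})$ is a derivation of the crossed module and that $\psi$ is bracket-preserving; both follow from the crossed-module and Lie-action axioms, so I do not expect a real obstacle, and the argument runs parallel to that of Theorem \ref{teo}. I note that finite dimensionality and non-abelianness of $L$ and $K$ are not used above; they would become relevant only if one wished to realize this isomorphism as the base component of an isomorphism (or isoclinism) between the class preserving actors $Act_{\mathcal{C}}(L)$ and $Act_{\mathcal{C}}(K)$, in which case the finite-dimensional hypothesis of Proposition \ref{prop23} would enter.
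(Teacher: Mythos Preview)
Your argument is correct and in fact cleaner than the paper's. The paper proceeds indirectly: it invokes Proposition~\ref{prop23} (which uses finite dimensionality) to deduce that $L_{0}$ is isoclinic to $K_{0}$ and $L_{1}$ is isoclinic to $K_{1}$ as Lie algebras, and then appeals to an external result of Sheikh-Mohseni, Saeedi and Badrkhani Asl to conclude that the component class-preserving derivation algebras agree, leaving the passage from the components to $Der_{\mathcal{C}}(L)$ itself largely implicit. Your route, by contrast, identifies $Der_{\mathcal{C}}(L)$ intrinsically with $\overline{L_{0}}$ via the surjection $\psi:l_{0}\mapsto(\alpha_{l_{0}},\beta_{l_{0}})$ with kernel $St_{L_{0}}(L_{1})\cap Z(L_{0})$, and then reads off the isomorphism directly from the base component $\eta_{0}$ of the isoclinism. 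This is self-contained, avoids the external citation, and --- as you correctly observe --- does not use finite dimensionality or non-abelianness at all, so it actually proves a stronger statement than the one asserted. The only cost is the handful of routine verifications (that $(\alpha_{l_{0}},\beta_{l_{0}})\in Der(L)$ and that $\psi$ preserves brackets), all of which follow from the crossed-module axioms and the Lie-action Jacobi identity exactly as you indicate.
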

	
	\begin{proof}
		Since $L$ and $K$ are isoclinic and finite dimensional, from Proposition \ref{prop23} \ $L_{1}$ and $L_{0}$ are isoclinic to $K_{1}$ and $K_{0}$, respectively. So $Der_{_{\mathcal{C}}}(L_1)\cong Der_{_{\mathcal{C}}}(K_1)$ and $Der_{_{\mathcal{C}}}(L_0)\cong Der_{_{\mathcal{C}}}(K_0)$ from \cite{SM1}, as required.
	\end{proof}
	
	Consequently we have the following result from Theorem \ref{teo} and Proposition \ref{pro}.
	
	\begin{corollary}
		Let $L:L_{1}\overset{d_{L}}{\longrightarrow }L_{0}$ and $K:K_{1}\overset{%
			d_{K}}{\longrightarrow }K_{0}$ be finite dimensional non-abelian isoclinic
		Lie crossed modules. Then $Act_{\mathcal{C}}(L)\cong Act_{\mathcal{\ C}}(K).$
	\end{corollary}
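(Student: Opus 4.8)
The plan is to assemble the isomorphism $Act_{\mathcal{C}}(L) \cong Act_{\mathcal{C}}(K)$ directly from the two pieces already established: the isomorphism $\phi : Der_{\mathcal{C}}(L_0,L_1) \to Der_{\mathcal{C}}(K_0,K_1)$ from Theorem \ref{teo}, and the isomorphism $\psi : Der_{\mathcal{C}}(L) \to Der_{\mathcal{C}}(K)$ from Proposition \ref{pro}. Since $Act_{\mathcal{C}}(L)$ is by definition the crossed module $Der_{\mathcal{C}}(L_0,L_1) \overset{\Delta_{\mathcal{C}}}{\longrightarrow} Der_{\mathcal{C}}(L)$ (Proposition \ref{06}), to exhibit a crossed module isomorphism it suffices to check that the pair $(\phi,\psi)$ is a morphism of crossed modules, i.e. that it commutes with the boundary maps $\Delta_{\mathcal{C}}$ and is compatible with the actions; both $\phi$ and $\psi$ being bijections, the inverse pair will automatically be a morphism as well, so $(\phi,\psi)$ is an isomorphism.

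First I would recall, from the proofs of Theorem \ref{teo} and Proposition \ref{pro}, the explicit description of $\phi$ and $\psi$ in terms of the isoclinism data $((\eta_1,\eta_0),(\xi_1,\xi_0))$: a class-preserving derivation $\delta$ with $\delta(l_0) = [l_0,a_1]$ is sent to the class-preserving derivation determined by $a_1$ transported via $\eta_1$ (on the quotient) and the assignment on $St_{K_0}(K_1)\cap Z(K_0)$ being zero, and similarly $\psi$ sends $(\alpha,\beta)$, where $\alpha(l_1)=[l_0,l_1]$ and $\beta(l_0')=[l_0,l_0']$ for some $l_0 \in L_0$, to the class-preserving derivation of $K$ determined by the transported element of $K_0$. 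The key point is that $\phi$ and $\psi$ are ``built from the same element'': if $\delta$ corresponds to $a_1 \in L_1$ then $\Delta_{\mathcal{C}}(\delta) = (\delta d, d\delta)$, and one checks $\delta d(l_1) = [d(l_1),a_1] = [a_1,l_1]$ (using crossed module axiom 2) while $d\delta(l_0) = d[l_0,a_1] = [l_0,d(a_1)]$ (using axiom 1), so $\Delta_{\mathcal{C}}(\delta)$ is precisely the inner-type element of $Der_{\mathcal{C}}(L)$ associated with $a_1$ pushed through $d$. Since $\phi$ and $\psi$ are both defined by transporting the underlying ``generator'' along the isoclinism, the square
\[
\xymatrix{ Der_{\mathcal{C}}(L_0,L_1) \ar[r]^-{\Delta_{\mathcal{C}}} \ar[d]_{\phi} & Der_{\mathcal{C}}(L) \ar[d]^{\psi} \\ Der_{\mathcal{C}}(K_0,K_1) \ar[r]_-{\Delta_{\mathcal{C}}} & Der_{\mathcal{C}}(K) }
\]
commutes, which I would verify by evaluating both composites on an arbitrary $\delta$ and comparing, using the compatibility of $\eta_1$ with the boundary maps $\overline{d_L}, \overline{d_K}$ and of $\xi_1,\xi_0$ with the commutator maps (diagrams (1) and (2)).

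Next I would check action-compatibility: for $(\alpha,\beta) \in Der_{\mathcal{C}}(L)$ and $\delta \in Der_{\mathcal{C}}(L_0,L_1)$ one needs $\phi\big((\alpha,\beta)\cdot\delta\big) = \psi(\alpha,\beta)\cdot\phi(\delta)$, where the action is $(\alpha,\beta)\cdot\delta = \alpha\delta - \delta\beta$. Writing $\alpha(l_1) = [l_0,l_1]$, $\beta(l_0') = [l_0,l_0']$ and $\delta(l_0') = [l_0',a_1]$, a short computation gives $(\alpha\delta - \delta\beta)(l_0') = [l_0,[l_0',a_1]] - [[l_0,l_0'],a_1] = [l_0',[l_0,a_1]]$ by the Jacobi identity, so the action again just replaces the generator $a_1$ by $[l_0,a_1]$; transporting this along the isoclinism and comparing with the definition of the $K$-side action on the $K$-side generators shows the two agree. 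The main obstacle I anticipate is bookkeeping rather than anything conceptual: one must be careful that the elements $a_1 \in L_1$ (and $l_0 \in L_0$) realizing a given class-preserving derivation are only well-defined modulo $L_1^{L_0}$ (resp. $St_{L_0}(L_1)\cap Z(L_0)$), so every identity must be checked to descend to the central quotients where $\eta_1,\eta_0$ actually live, and the finite-dimensionality and non-abelian hypotheses (inherited from Proposition \ref{pro}, hence from \cite{SM1}) are what guarantee the transported maps are genuine isomorphisms rather than merely well-defined homomorphisms. Once both squares commute, invoking Proposition \ref{06} / the definition of $Act_{\mathcal{C}}$ gives the crossed module isomorphism $Act_{\mathcal{C}}(L) \cong Act_{\mathcal{C}}(K)$, completing the proof.
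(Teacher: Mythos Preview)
Your strategy is exactly the paper's: the corollary is presented there as an immediate consequence of Theorem~\ref{teo} and Proposition~\ref{pro} with no further argument, and you simply supply the crossed-module verification (commutation with $\Delta_{\mathcal{C}}$ and action-compatibility) that the paper leaves implicit. One small caution: the isomorphism $\psi$ in Proposition~\ref{pro} is obtained indirectly via Proposition~\ref{prop23} and the citation \cite{SM1} rather than built directly from $(\eta_{1},\eta_{0})$, so the explicit ``transport the generator $l_{0}$ along $\eta_{0}$'' description you rely on is an extra identification you should either justify or reconcile with the cited construction before the square-chasing goes through.
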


	As expected, we have the following proposition which shows the relation
	between isoclinism, nilpotency and solvability. We omit the proof which is
	an analogue to corollary 14 in \cite{AO}.
	
	\begin{proposition}
		Let $L:L_{1}\overset{d_{L}}{\longrightarrow }L_{0}$ and $K:K_{1}\overset{%
			d_{K}}{\longrightarrow }K_{0}$ be isoclinic Lie crossed modules.\newline
		(i) $L$ is nilpotent (solvable) if and only if $K$ is nilpotent (solvable).%
		\newline
		(ii) If $L$ and $K$ are nilpotent (solvable) and both nontrivial, then they
		have the same nilpotency class (derived length).
	\end{proposition}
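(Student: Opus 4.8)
The plan is to mimic the known Lie-algebra result (the analogue of corollary~14 in \cite{AO}), transporting everything through the isoclinism data. Fix an isoclinism $((\eta_1,\eta_0),(\xi_1,\xi_0)):L\sim K$, so that we have the commutator maps $(c_1,c_0)$ and $(c_1',c_0')$ intertwined by the squares (1) and (2). The essential point is that the lower central series and the derived series of a Lie crossed module are built entirely out of iterated commutator brackets, hence live inside $[L,L]$ after the first step and are computed by $c_1,c_0$; since these maps are matched via $\xi_1,\xi_0$ (which are \emph{isomorphisms} of the commutator subcrossed modules), the corresponding terms in the two series are carried to one another.

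For part (i), I would argue by induction. Recall $(L_1,L_0,d)^{n}$ and $(L_1,L_0,d)^{(n)}$ are defined as the crossed submodules generated iteratively by brackets of the form $[l_0,l_1]$ and $[l_0,l_0']$. The first term $(L_1,L_0,d)^{1}=(L_1,L_0,d)^{(1)}=[L,L]$, and $\xi=(\xi_1,\xi_0)$ is an isomorphism $[L,L]\xrightarrow{\ \cong\ }[K,K]$. For the inductive step one checks that $\xi$ carries $(L_1,L_0,d)^{n+1}$ onto $(K_1,K_0,d_K)^{n+1}$: an element of the $L_1$-component of the $(n+1)$-st term is a sum of brackets $[x_0,y_1]$ with the "outer" factor ranging over $L_0$ (resp.\ over the $n$-th term in the solvable case) and the "inner" factor in the $n$-th term; using the commutativity of square (1) (and (2) for the $L_0$-component), together with the fact that a bracket $[l_0,y_1]$ depends only on the class $\overline{l_0}\in\overline{L_0}$ and the class of $y_1$ — this is exactly the well-definedness established in the Proposition preceding Definition~\ref{04} — one sees that $\xi_1[x_0,y_1]=c_1'(\eta_0\overline{x_0},\xi_1 y_1)$ lies in the $(n+1)$-st term of $K$, and symmetrically $\xi_1^{-1}$ pulls back. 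Hence $\xi$ restricts to an isomorphism on every term of both series, so $(L_1,L_0,d)^{n}=0\iff(K_1,K_0,d_K)^{n}=0$ and likewise for the derived series; this gives (i).

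For part (ii), suppose $L$ and $K$ are both nilpotent and nontrivial. By (i) they are simultaneously nilpotent; let $c$ be the nilpotency class of $L$, so $(L_1,L_0,d)^{c}=0$ but $(L_1,L_0,d)^{c-1}\neq 0$. Since the isomorphism $\xi$ (together with $\eta$, which handles the degenerate case $c=1$, i.e.\ $[L,L]=0$, where nontriviality forces $\overline{L}\neq 0$ and hence $\overline{K}\neq0$) matches the $n$-th terms of the two series for all $n$, we get $(K_1,K_0,d_K)^{c}=0$ and $(K_1,K_0,d_K)^{c-1}\neq0$, so $K$ has nilpotency class exactly $c$. The argument for the derived length is identical, replacing $(-)^{n}$ by $(-)^{(n)}$.

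The main obstacle is purely bookkeeping: one must verify carefully that the terms of the lower central and derived series of a crossed module, beyond the first, are genuinely determined by the commutator maps $c_1,c_0$ on the central quotients — i.e.\ that no "extra" information outside $[L,L]$ enters — and that the crossed-module structure (the boundary $d$ and the action) on these terms is likewise transported by $\xi_1,\xi_0$. This is where the hypotheses built into the definition of isoclinism (commutativity of (1) and (2), and $\xi$ being a crossed-module isomorphism onto $[K,K]$) are used in full; once that is in place the induction is routine, which is why, as the paper says, the proof is omitted as an analogue of corollary~14 in \cite{AO}.
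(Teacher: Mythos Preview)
Your approach is correct and is exactly the analogue the paper gestures at by citing corollary~14 of \cite{AO}: show by induction that the isomorphism $(\xi_1,\xi_0):[L,L]\to[K,K]$ restricts to an isomorphism between the $n$-th terms of the lower central (resp.\ derived) series, using the commuting squares (1) and (2) at the inductive step. Two small slips to clean up: in your displayed identity the arguments of $c_1'$ are swapped and mistyped (it should read $\xi_1[x_0,y_1]=c_1'\bigl(\eta_1\overline{y_1},\eta_0\overline{x_0}\bigr)$, after which one uses $\eta_1\overline{y_1}=\overline{\xi_1 y_1}$ to land in $(K)^{n+1}$); and your parenthetical on the $c=1$ case is off, since an abelian crossed module has $\overline{L}=0$ even when $L$ is nontrivial --- but that case is harmless anyway because $[K,K]\cong[L,L]=0$ forces $K$ abelian as well.
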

	
	\begin{remark}
		When we consider the Lie algebras as crossed modules, then we recover
		classical results for isoclinic Lie algebras. In fact, if $L\overset{id}{%
			\longrightarrow }L$ and $K\overset{id}{\longrightarrow }K$ are isoclinic Lie
		crossed modules then we find that $L$ and $K$ are isoclinic. On the other
		hand, let $\mathfrak{g,}$ $\mathfrak{h}$ are finite dimensional Lie algebras
		and $\mathfrak{g}^{\prime }\trianglelefteq \mathfrak{g}$, $\mathfrak{h}%
		^{\prime }\trianglelefteq \mathfrak{h}$. Then the isoclinism of inclusion
		crossed modules $\mathfrak{g}^{\prime }\overset{inc.}{\hookrightarrow }%
		\mathfrak{g}$ and $\mathfrak{h}^{\prime }\overset{inc.}{\hookrightarrow }%
		\mathfrak{h}$ give rise to the isoclinism between the pair Lie algebras $(%
		\mathfrak{g}^{\prime },\mathfrak{g})$ and $(\mathfrak{h}^{\prime },\mathfrak{%
			h}).$ Also, the converse is true, that is, if $(\mathfrak{g}_{1},\mathfrak{g}%
		_{0})$ and $(\mathfrak{h}_{1},\mathfrak{h}_{0})$ are isoclinic pair Lie
		algebras, then the resulting inclusion crossed modules $\mathfrak{g}_{1}%
		\overset{inc.}{\hookrightarrow }\mathfrak{g}_{0}${\ }and $\mathfrak{h}_{1}%
		\overset{inc.}{\hookrightarrow }\mathfrak{h}_{0}${\ }are isoclinic. (See 
		\cite{MP}, for the definition of isoclinic pair Lie algebras)
	\end{remark}

\end{document}